\newtheorem{theorem}{Theorem}%[section] %[section] here insures the start of each section has sets the theorem counter back to 1. 
\newtheorem{lemma}{Lemma}
\newtheorem{corollary}{Corollary}
\newtheorem{conjecture}{Conjecture}
\newtheorem{proposition}{Proposition}
\theoremstyle{definition}
\newtheorem{example}{Example}
\theoremstyle{definition}
\newtheorem{definition}{Definition}
\theoremstyle{definition}
\newtheorem{question}{Question}
\theoremstyle{definition}
\newtheorem{remark}{Remark}
\newcommand{\C}{\mathbb{C}}			%complex numbers
\newcommand{\Grf}{\mathcal{G}}
\newcommand{\col}{\mathrm{col}}
\newcommand{\bigslant}[2]{{\raisebox{.2em}{$#1$}\left/\raisebox{-.2em}{$#2$}\right.}} % for pretty quotient group appearance
\newcommand\ackname{Acknowledgements}
  \newenvironment{acknowledgements}{%
      \titlepage
      \null\vfil
      \@beginparpenalty\@lowpenalty
      \begin{center}%
        \bfseries \ackname
        \@endparpenalty\@M
      \end{center}}%
     {\par\vfil\null\endtitlepage}
  \newenvironment{acknowledgements}{%
      \if@twocolumn
        \section*{\abstractname}%
      \else
        \small
        \begin{center}%
          {\bfseries \ackname\vspace{-.5em}\vspace{\z@}}%
        \end{center}%
        \quotation
      \fi}
      {\if@twocolumn\else\endquotation\fi}
\begin{document}

\title[Representation stability of Springer varieties]{Representation stability of the cohomology of Springer varieties and some combinatorial consequences}

\author{Aba Mbirika}
\address{Department of Mathematics, University of Wisconsin-Eau Claire, Eau Claire, WI, U.S.A.}
\email{mbirika@uwec.edu}
%\urladdr{\url{New UWEC email addy?}}
\thanks{The first author was partially supported by the UWEC Department of Mathematics and Office of Research and Sponsored Programs.}

\author{Julianna Tymoczko}
\address{Department of Mathematics and Statistics, Smith College, Northampton, MA, U.S.A.}
\email{jtymoczko@smith.edu}
%\urladdr{\url{http://www.math.smith.edu/~jtymoczko/}}
\thanks{The second author was partially supported by a Sloan Research Fellowship as well as NSF grants DMS-1248171 and DMS-1362855.}

%\keywords{} 
%\subjclass[2000]{Primary: 05E05; Secondary: 14M15, 05E10}

\date{\today}

\begin{abstract}
A sequence of $S_n$-representations $\{V_n\}$ is said to be uniformly representation stable if the decomposition of $V_n = \bigoplus_{\mu} c_{\mu,n} V(\mu)_n$ into irreducible representations is independent of $n$ for each $\mu$---that is, the multiplicities $c_{\mu,n}$ are eventually independent of $n$ for each $\mu$.  Church-Ellenberg-Farb proved that the cohomology of flag varieties (the so-called diagonal coinvariant algebra) is uniformly representation stable.  We generalize their result from flag varieties to all Springer fibers.  More precisely, we show that for any increasing subsequence of Young diagrams, the corresponding sequence of Springer representations form a graded co-FI-module of finite type (in the sense of Church-Ellenberg-Farb).  We also explore some combinatorial consequences of this stability.
\end{abstract}

\maketitle

%\setcounter{tocdepth}{1}
%\tableofcontents

%%%%%%%%%%%%%%%%%%%%%%%%%%%%%%%%%%%%%%%%%%%%
%%%%%%%%%%%%%%%%%%%%%%%%%%%%%%%%%%%%%%%%%%%%
%%               Section 1                %%
%%%%%%%%%%%%%%%%%%%%%%%%%%%%%%%%%%%%%%%%%%%%
%%%%%%%%%%%%%%%%%%%%%%%%%%%%%%%%%%%%%%%%%%%%

\section{Introduction}

Homological stability is a topological property of certain sequences of topological spaces: given a sequence $\{X_n\}_{n=1}^\infty$ of topological spaces with maps $\phi_n : X_{n} \rightarrow X_{n+1}$ for each $n$, then $\{X_n\}_{n=1}^\infty$ is homologically stable if there exists a positive integer $N$ so that the maps $(\phi_n)_*: H_i(X_{n}) \rightarrow H_i(X_{n+1})$ is an isomorphism whenever $n \geq N$.  In other words the homology groups stabilize after a certain point in this sequence; however the topological spaces change later in the sequence, the changes do not affect the $i^{\mathrm{th}}$ homology group.  

Church and Farb \cite{CF}, and later Church, Ellenberg, and Farb \cite{CEF} defined {\em representation stability} to mimic the topological definition.  Informally, the sequence of $S_n$-representations $V_1 \stackrel{f_1}{\rightarrow} V_2 \stackrel{f_2}{\rightarrow} V_3 \stackrel{f_3}{\rightarrow} \cdots$ is representation-stable if there are $S_n$-equivariant  linear injections $f_n: V_n \rightarrow V_{n+1}$ and if there exists an $N$ so that the multiplicities $c_{\mu,n}$ in the decomposition into irreducibles
\[V_n = \bigoplus_\mu c_{\mu,n} V(\mu)_n\]
are independent of $n$ for all $n \geq N$.  (Section \ref{sec: representation stability} defines representation stability precisely.)  

One important problem is to find families of representations that are representation stable; these families often arise from geometric considerations.  In their original work, Church, Ellenberg, and Farb identify a number of representation stable families arising from geometry/topology and classical representation theory~\cite{CEF}.  Indeed their example of the diagonal coinvariant algebra \cite[Section~5]{CEF} is the sequence of cohomology rings of flag varieties $\{H^*(GL_n/B_n)\}_{n=1}^\infty$ which in some sense is the springboard of this paper.   Since then, others have demonstrated that representation stability arises naturally in many contexts, including arrangements associated to root systems~\cite{Bibby}, linear subspace arrangements~\cite{Gadish}, configuration spaces in $\mathbb{R}^d$~\cite{Hersh-Reiner}, filtrations of Torelli groups~\cite{Patzt}, moduli spaces of Riemann surfaces of genus $g$ with $n$ labeled marked points~\cite{Rolland2011}, and others~\cite{Duque-Rolland,Rolland2016}.

The central goal of this paper is to prove that an important family of $S_n$-representations called {\em Springer representations} is representation stable.  The Springer representation is the archetypal geometric representation: in its most basic form, it arises when the symmetric group $S_n$ acts on the cohomology of a family of subvarieties of the flag variety called {\em Springer fibers}.  The flag variety can be described as the set of nested vector subspaces
$$V_1 \subseteq V_2 \subseteq \cdots \subseteq V_{n-1} \subseteq \mathbb{C}^n$$
where each $V_i$ is $i$-dimensional.  Given a nilpotent matrix $X$, the Springer fiber consists of the flags that are fixed by $X$ in the sense that $XV_i \subseteq V_i$ for all $i$.  Every nilpotent matrix is conjugate to one in Jordan form, which is determined by a partition of $n$ into Jordan blocks.  Since Springer fibers associated to conjugate matrices $X$ and $gXg^{-1}$ are homeomorphic, the Springer fibers are parametrized by partitions of $n$.

Springer first constructed a representation of the symmetric group $S_n$ on the cohomology of Springer fibers \cite{Spr78}.  Since then, the representation has been recreated in many different ways \cite{dCP81,Tani82,BorMac83,Lus84}.  The geometry of Springer fibers encodes key data about representations of the symmetric group.  For instance the top-dimensional cohomology is an irreducible $S_n$-representation \cite{Spr78}; and the ungraded representation on $H^*(Spr_{\lambda})$ is Young's representation associated to the partition $\lambda$ \cite[Introduction]{GarPro92}. 

Our analysis of Springer representations uses the {\bf co-FI} category, which Church, Ellenberg, and Farb defined to concisely describe the compatibility conditions needed for representation stability.   Theorem~\ref{thm:the_main_theorem} proves that for any increasing sequence of Young diagrams, the corresponding Springer representations form a graded co-FI-module of finite type.  The main consequence from our point of view is that sequences of Springer representations are representation stable (see Corollary~\ref{cor:springer_rep_is_representation_stable} for a precise statement).  

From this many other properties follow: (1) for each fixed degree and $n$ large enough, the character is given by a polynomial that is independent of $n$, and in particular (2) the dimension of the Springer representation is eventually polynomial in $n$. It is this second consequence that we explore in Section~\ref{sec:combinatorial_consequences}.

\begin{remark}\label{rem:Hotta}
Curiously, the literature does not make a clear distinction between the Springer representation and its dual. Hotta first observed this and classified existing constructions of the Springer representation up to that point~\cite{Hotta81}. However, the ambiguity persisted with subsequent constructions of the Springer representation.  In this paper, we treat ``the" Springer representation interchangeably with its dual.  We prove that the Garsia-Procesi construction of the Springer representation is a co-FI-module (graded, of finite type) and its {\em dual} is representation stable.  Properties like dimension and decomposition into irreducibles are well-behaved with respect to duality and apply to the original Garsia-Procesi construction, too.
\end{remark}

Kim proves a different kind of stability of Springer representations, giving conditions under which the lower-graded parts of the Springer representation for $\lambda$ coincide with those of $\lambda'$ for partitions $\lambda, \lambda'$ of the same $n$ \cite{Kim_Stability}.  Kim recovers Theorem \ref{thm: at least k+1 rows gives a polynomial dimension formula} using his notion of stability \cite[Corollary 4.3]{Kim_Stability}.  

This paper is organized as follows. In Section~\ref{sec:FI_and_co-FI}, we define the conditions that guarantee the stability of a sequence of $S_n$-representations. In Section~\ref{sec:Springer_theory}, we describe Garsia-Procesi's combinatorial description of the Springer representation, and in particular the so-called {\em Tanisaki ideal}. In Section~\ref{sec:FI-module_structure}, we prove there is no FI-module structure on sequences of Springer representations, except for the trivial representation. On the other hand, in Section~\ref{sec:co-FI-module_structure} we prove that the sequence of Tanisaki ideals forms a co-FI-ideal and deduce that a co-FI-module structure exists on all sequences of Springer representations. In Section~\ref{sec: representation stability}, we conclude with our main result on the stability of the Springer representations. We give some concrete combinatorial consequences of this stability in Section~\ref{sec:combinatorial_consequences}, and a collection of open questions in Section~\ref{sec:open_questions} that probe the new combinatorial ideas raised by representation stability.

%%%%%%%%%%%%%%%%%%%%%%%%%%%%%%%%%%%%%%%%%%%%
%%%%%%%%%%%%%%%%%%%%%%%%%%%%%%%%%%%%%%%%%%%%
%%               Section 2                %%
%%%%%%%%%%%%%%%%%%%%%%%%%%%%%%%%%%%%%%%%%%%%
%%%%%%%%%%%%%%%%%%%%%%%%%%%%%%%%%%%%%%%%%%%%

\section{{\bf{FI}} and {\bf{co-FI}}}\label{sec:FI_and_co-FI}

In this section we describe FI-modules and co-FI-modules, defined by Church, Ellenberg, and Farb \cite{CEF} to streamline and extend the essential features of Church and Farb's earlier notion of representation  stability \cite{CF}.  We begin with the categories {\bf FI} and {\bf co-FI}, which carry actions of the permutation groups $S_n$ and are constructed to be compatible with inclusions.  The key example of FI- and co-FI-modules for this manuscript is the sequence of polynomial rings $\{k[x_1,\ldots,x_n]\}$.  We then list the properties about FI- and co-FI-modules that we will need to establish that sequences of Springer representations are graded co-FI-modules.

This section provides only what is needed in this paper.  The interested reader is referred to Church, Ellenberg, and Farb's work for many other interesting results \cite{CEF}.

\begin{remark}
We assume $k$ is a field of characteristic zero. Parts of Church-Ellenberg-Farb's theory extends to other fields as well \cite{CEF}.
\end{remark}

\begin{definition}[FI-module, FI-algebra graded FI-algebra]%[Church-Ellenberg-Farb~{\cite[Definition 1.1 and Section 2.10]{CEF}}]
Let $\mathbf{n}$ denote the set $\{1,\ldots,n\}$.  {\bf FI} is the category whose objects are finite sets and whose morphisms are injections.  This is equivalent to the category whose objects are sets $\mathbf{n} = \{1,2,\ldots,n\}$ and whose morphisms are injections $\mathbf{m} \rightarrow \mathbf{n}$.  An \textit{FI-module} (\textit{FI-algebra}, \textit{graded FI-algebra}) over a commutative ring $k$ is a functor $V$ from {\bf FI} to the category of modules over $k$ ($k$-algebras, graded $k$-algebras).  We usually denote the $k$-module (respectively algebra) $V(\mathbf{n})$ by $V_n$.
\end{definition}

Modules that carry permutation actions compatible with the permutation action on integers provide a rich source of examples of FI-modules.  The next example is the most important for our purposes.

\begin{example}[{\cite[Example 4.1.2 and Remark 4.1.3]{CEF}}]
Define a functor $R$ by:
\begin{itemize}
\item $R$ sends the object $\mathbf{n}$ to the polynomial ring $k[x_1,\ldots,x_n]$ and 
\item $R$ sends the morphism $f: \mathbf{m} \rightarrow \mathbf{n}$ to the homomorphism $$f_*: k[x_1,\ldots,x_m] \rightarrow k[x_1,\ldots,x_n]$$ induced by the condition that $f_*(x_i)=x_{f(i)}$ for all $i \in \mathbf{m}$.
\end{itemize} 
Then $R$ is a graded FI-algebra.  In particular, we have that $R_n$ is the polynomial ring $k[x_1,\ldots,x_n]$.
\end{example}

The category {\bf co-FI} is opposite to {\bf FI}.  The polynomial algebras also form a graded co-FI-algebra, as described below.

\begin{definition}[co-FI-module, co-FI-algebra, graded co-FI-algebra] %[Church-Ellenberg-Farb~{\cite[Definition 2.3 and Section 2.10]{CEF}}]
Denote the opposite category of {\bf FI} by {\bf co-FI}.  In particular the objects in {\bf co-FI} are finite sets, without loss of generality the sets $\mathbf{n}$, and the morphisms in {\bf co-FI} from $\mathbf{n}$ to $\mathbf{m}$ are the morphisms in {\bf FI} from $\mathbf{m}$ to $\mathbf{n}$.  A \textit{co-FI-module }(\textit{co-FI-algebra}, \textit{graded co-FI-algebra}) over a commutative ring $k$ is a functor from {\bf co-FI} to the category of $k$-modules ($k$-algebras, graded $k$-algebras).
\end{definition}

For instance if $V$ is an FI-module over $k$ then the dual $V^\vee$ forms a co-FI-module, and vice versa.
 
\begin{example}[{\cite[Example 4.1.2 and Remark 4.1.3]{CEF}}]
Define a functor $R$ as follows:
\begin{itemize}
\item $R$ sends the object $\mathbf{n}$ to the polynomial ring $k[x_1,\ldots,x_n]$ and 
\item $R$ sends the FI-morphism $f: \mathbf{m} \rightarrow \mathbf{n}$ to the homomorphism $$f^*: k[x_1,\ldots,x_n] \rightarrow k[x_1,\ldots,x_m]$$ induced by the condition that 
\[f^*(x_i)=\left\{ \begin{array}{ll}
x_{f^{-1}(i)} &\textup{  if  } i \in \textup{Im}(f) \textup{  and} \\
0 & \textup{  otherwise.}
  \end{array}\right.\]
\end{itemize} 
Then $R$ is a graded co-FI-algebra.
\end{example}

The module categories {\bf FI} and {\bf co-FI} are abelian and so admit many of the algebraic constructions that modules and algebras do.  In particular we can consider co-FI-quotients of a co-FI-module and FI-submodules of an FI-module.

\begin{example}[{\cite[{\bf Classical coinvariant algebra} in Section 5]{CEF}}]\label{example: ideal of symmetric polys is not FI-module}
For each $n$ let $I_n$ denote the ideal of symmetric polynomials with no constant term.  The sequence of ideals $I$ is not an FI-submodule of the FI-module $R$ because the image of an $S_m$-symmetric polynomial under the inclusion map $\iota: \{1,\ldots,m\} \rightarrow \{1,\ldots,n\}$ is not symmetric under the larger group $S_n$.  However the ideals $I$ do form a co-FI-submodule of the co-FI-module $R$.
\end{example}

The next lemma proves that the image of a set of $S_m$-invariant polynomials under an arbitrary injection is the same as the image under the inclusion $\iota$ that is the identity on the integers $\{1,\ldots,m\}$.  We use it to simplify later calculations.

\begin{lemma} \label{lemma: inclusion suffices}
Suppose that $m \leq n$ and that $f: \{1,\ldots,m\} \rightarrow \{1,\ldots,n\}$ is an injection.  Let $\iota$ be the inclusion $\iota: \{1,\ldots,m\} \rightarrow \{1,\ldots,n\}$ that sends $\iota(i) = i$ for each $1 \leq i \leq m$.  

Consider the action of $S_n$ on $R_n$ under which for each $w \in S_n$ and $x_i \in R_n$ we have $w(x_i) = x_{w(i)}$.  Let ${\mathcal{J}} \subseteq R_n$ be any set of polynomials that are preserved under the $S_{n}$-action, in the sense that $w {\mathcal{J}} \subseteq {\mathcal{J}}$ for all permutations $w \in S_{n}$.  Then
\[f^*(\mathcal{J}) = \iota^*(\mathcal{J}).\]
\end{lemma}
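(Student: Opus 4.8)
The plan is to factor an arbitrary injection $f$ through the standard inclusion $\iota$ by a permutation, and then use the $S_n$-invariance hypothesis on $\mathcal{J}$ to kill the effect of that permutation. First I would observe that since $f$ and $\iota$ are both injections $\{1,\ldots,m\}\to\{1,\ldots,n\}$, there exists a permutation $w\in S_n$ with $w\circ\iota = f$; indeed, one can take any bijection of $\{1,\ldots,n\}$ that restricts to $i\mapsto f(i)$ on $\{1,\ldots,m\}$ and sends the complement $\{1,\ldots,n\}\setminus\{1,\ldots,m\}$ bijectively onto $\{1,\ldots,n\}\setminus\operatorname{Im}(f)$ (possible because both complements have the same cardinality $n-m$). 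Fix such a $w$.

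Next I would translate the equation $f = w\circ\iota$ of {\bf FI}-morphisms into an equation of the induced ring maps. Writing $R$ for the graded co-FI-algebra of polynomial rings, functoriality gives $f^* = \iota^* \circ w^*$ as maps $R_n \to R_m$, where $w^* : R_n \to R_n$ is the map induced by the permutation $w$ viewed as an {\bf FI}-morphism $\mathbf{n}\to\mathbf{n}$. Since $w$ is a bijection, the formula defining $R$ on {\bf FI}-morphisms simplifies: $w^*(x_i) = x_{w^{-1}(i)}$ for every $i$, with no zeros arising. In other words, $w^*$ is exactly the action of $w^{-1}$ (equivalently, of some element of $S_n$) on $R_n$ in the sense described in the lemma's hypothesis, namely the action $u(x_i) = x_{u(i)}$; here $w^*$ corresponds to $u = w^{-1}$.

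Then I would apply this to the set $\mathcal{J}$. By hypothesis $\mathcal{J}$ is preserved by every element of $S_n$ under this action, so in particular $w^*(\mathcal{J}) = \mathcal{J}$ (one inclusion is the hypothesis applied to $w^{-1}$; the reverse inclusion is the hypothesis applied to $w$, together with $w^* \circ (w^{-1})^* = \mathrm{id}$). Therefore
\[
f^*(\mathcal{J}) = \iota^*\bigl(w^*(\mathcal{J})\bigr) = \iota^*(\mathcal{J}),
\]
which is the claimed equality.

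The only genuinely delicate point is bookkeeping: making sure the direction of composition is right in the opposite category {\bf co-FI} (so that $f = w\circ\iota$ in {\bf FI} really does give $f^* = \iota^*\circ w^*$ and not the other order), and confirming that the map $w^*$ produced by the co-FI-algebra functor coincides on the nose with the $S_n$-action in the hypothesis rather than with its inverse. Neither is a real obstacle, but both are worth spelling out carefully, since the whole force of the lemma is that arbitrary injections can be replaced by the single inclusion $\iota$ in all subsequent computations with Tanisaki-type ideals.
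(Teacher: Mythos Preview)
Your proof is correct and follows essentially the same approach as the paper: both factor an arbitrary injection $f$ as a permutation followed by the standard inclusion $\iota$, then use the $S_n$-invariance of $\mathcal{J}$ to eliminate the permutation. The only difference is presentational: the paper constructs the permutation $w_f$ explicitly (as the inverse of your $w$) and verifies the identity $f^* = \iota^* \circ w_f$ by direct computation on the generators $x_i$, whereas you invoke functoriality of the co-FI-algebra $R$ to obtain $f^* = \iota^* \circ w^*$ in one stroke.
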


\begin{proof}
Our hypothesis means that the polynomial $p \in {\mathcal{J}}$ if and only if the image $w(p) \in {\mathcal{J}}$ for each $w \in S_{n}$.  The map $f: \{1,\ldots,m \} \hookrightarrow \{1,\ldots,n\}$ is an injection so we can define a permutation $w_f \in S_{n}$  by
\[w_f = \left\{ \begin{array}{ll}
f(i) \mapsto i & \textup{ for all $i$ with } 1 \leq i \leq m \\
j_i \mapsto m+i & \textup{ for $j_i$ such that both }  j_1<j_2< \cdots <j_{n-m} \textup{ and } \\
& \hspace{0.5in}  \{ j_1, j_2,  \cdots,j_{n-m}\} \cup \textup{Im}(f) = \{1,2,\ldots,n\}.
\end{array} \right. \]
We know that for each $i$
\[\iota^* (w_fx_i) = \iota^* (x_{w_f(i)})\]
by definition  of the $S_n$-action on permutations.  By construction of $\iota^*$ we have
\[ \iota^* (x_{w_f(i)}) = \left\{ \begin{array}{ll}
0 & \textup{ if } i \in \{ j_1, j_2,  \cdots,j_{n-m}\} \\
x_{f^{-1}(i)} & \textup{ otherwise}.
\end{array} \right. \]
This is exactly $f^*(x_i)$.  Thus $f^*(p) = \iota^* (w_fp)$ for all polynomials $p \in {\mathcal{J}}$ and hence as desired
$f^*({\mathcal{J}}) = \iota^*({\mathcal{J}})$.
\end{proof}

It follows that if $I$ is a sequence of symmetric homogeneous ideals in the graded co-FI-algebra $R$ then we can prove $I$ is a co-FI-submodule simply by considering inclusions---or even just a subset of inclusions.

\begin{corollary}\label{corollary: simple inclusions}
Let $R$ be a graded co-FI-algebra and $I$ be a sequence of ideals $\{I_1,I_2,\ldots\}$ with each $I_n$ an $S_n$-invariant homogeneous ideal in $R_n$.  The following are equivalent:
\begin{enumerate}
\item \label{FI part} $I$ is a co-FI-submodule of $R$.
\item \label{general inclusion part} For each $m<n$ and inclusion $\iota: \{1,\ldots,m\} \rightarrow \{1,\ldots,n\}$ defined by $\iota(i)=i$ for all $i$, the induced map satisfies $\iota^*(I_n)\subseteq I_m$.
\item \label{step inclusion part} For each $n$ and inclusion $\iota_n: \{1,\ldots,n-1\} \rightarrow \{1,\ldots,n\}$, the induced map satisfies $\iota_n^*(I_n)\subseteq I_{n-1}$.
\end{enumerate}
\end{corollary}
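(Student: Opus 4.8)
The plan is to prove the cycle of implications $(\ref{FI part}) \Rightarrow (\ref{general inclusion part}) \Rightarrow (\ref{step inclusion part}) \Rightarrow (\ref{FI part})$, only the last of which requires real work.

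For $(\ref{FI part}) \Rightarrow (\ref{general inclusion part})$, suppose $I$ is a co-FI-submodule of $R$. By definition the structure map $R(f)$ attached to any FI-morphism $f$ restricts to a map between the corresponding ideals, so applying this to a standard inclusion $\iota\colon \{1,\ldots,m\} \hookrightarrow \{1,\ldots,n\}$ gives exactly $\iota^*(I_n) \subseteq I_m$. The implication $(\ref{general inclusion part}) \Rightarrow (\ref{step inclusion part})$ is immediate, since each single-step inclusion $\iota_n\colon \{1,\ldots,n-1\} \hookrightarrow \{1,\ldots,n\}$ is the special case $m = n-1$ of the inclusions considered in $(\ref{general inclusion part})$.

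The substantive step is $(\ref{step inclusion part}) \Rightarrow (\ref{FI part})$. First I would reduce to standard inclusions: given an arbitrary FI-morphism $f\colon \{1,\ldots,m\} \rightarrow \{1,\ldots,n\}$, Lemma~\ref{lemma: inclusion suffices} applies with $\mathcal{J} = I_n$---which is $S_n$-invariant by hypothesis---and yields $f^*(I_n) = \iota^*(I_n)$ for the standard inclusion $\iota\colon \{1,\ldots,m\} \hookrightarrow \{1,\ldots,n\}$. Thus it suffices to bound $\iota^*(I_n)$. Next I would factor this inclusion into single steps, $\iota = \iota_n \circ \iota_{n-1} \circ \cdots \circ \iota_{m+1}$ in {\bf FI}; functoriality on {\bf co-FI} reverses the composition order, so $\iota^* = \iota_{m+1}^* \circ \iota_{m+2}^* \circ \cdots \circ \iota_n^*$, a chain of maps $R_n \to R_{n-1} \to \cdots \to R_m$. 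Applying hypothesis $(\ref{step inclusion part})$ at each stage---$\iota_n^*(I_n) \subseteq I_{n-1}$, then $\iota_{n-1}^*(I_{n-1}) \subseteq I_{n-2}$, and so on down to $\iota_{m+1}^*(I_{m+1}) \subseteq I_m$---gives $\iota^*(I_n) \subseteq I_m$, hence $f^*(I_n) \subseteq I_m$ for every FI-morphism $f$. Therefore each structure map $R(f)$ restricts to a well-defined map $I_n \to I_m$, and since restrictions of the maps of a functor remain functorial, $\mathbf{n} \mapsto I_n$ is a co-FI-submodule of $R$.

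I do not anticipate a genuine obstacle here: each of the three conditions unwinds directly from the definitions once Lemma~\ref{lemma: inclusion suffices} is in hand, and the only point needing care is tracking the reversal of composition order when passing between {\bf FI} and {\bf co-FI} in the factorization step.
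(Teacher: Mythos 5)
Your proposal is correct and takes essentially the same approach as the paper: both reduce arbitrary injections to the standard inclusion via Lemma~\ref{lemma: inclusion suffices}, and both use functoriality to compose single-step inclusions into arbitrary ones. The only difference is organizational — you close a cycle $(1)\Rightarrow(2)\Rightarrow(3)\Rightarrow(1)$, folding the paper's two separate steps $(3)\Rightarrow(2)$ and $(2)\Rightarrow(1)$ into your single $(3)\Rightarrow(1)$ — which changes nothing substantive.
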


\begin{proof}
Part~(\ref{FI part})  is equivalent to Part~(\ref{general inclusion part}) by Lemma~\ref{lemma: inclusion suffices}.  Part~(\ref{general inclusion part}) implies Part~(\ref{step inclusion part}) by definition.  The composition of inclusions $\iota_n \circ \iota_{n-1} \circ \cdots \circ \iota_{m+1}$ is the inclusion $\iota: \{1,\ldots,m\} \hookrightarrow \{1,\ldots,n\}$.  By functoriality if $\iota_{i}^*(I_i) \subseteq I_{i-1}$ for each $i\geq 2$ then $\iota^*(I_n) \subseteq I_m$ for each pair $n \geq m \geq 1$. So Part~(\ref{step inclusion part}) implies Part~(\ref{general inclusion part}).  
\end{proof}

The definition of finitely-generated FI-modules is crucial to representation stability.  It differs importantly from the corresponding definition for modules or rings because it incorporates the underlying $S_n$-action.  As we see in Example~\ref{example: finite generation}, this implies that FI-modules often have fewer generators than we might expect.

In fact, though this does not appear explicitly in the literature, the category of {\em finitely generated} {\bf FI} modules over Noetherian rings is also abelian (by the Noetherian property, proven over $\mathbb{C}$ in~\cite{CEF} and over other Noetherian rings in~\cite{CEFN})~\cite{C20}.  This is the thrust of the arguments that we cite in this paper.

\begin{definition}[Finite generation, finite type]\label{def:finite_generation_and_type}
An FI-module $V$ is \textit{finitely generated} if there is a finite set $S$ of elements in $\coprod_i V_i$ so that no proper sub-FI-module of $V$ contains $S$.  A graded FI-module $V$ has \textit{finite type} if the $i^{\mathrm{th}}$ graded part $V^i$ is finitely generated for each $i$. A graded co-FI-module $W$ is of finite type if its dual $W^\vee$ is a graded FI-module of finite type \cite[Co-FI-algebras in Section 4.2]{CEF}
\end{definition}

\begin{example}[{\cite[{\bf Graded FI-modules of finite type} in Section 4.2]{CEF}, \cite[Example 4.2]{Far14}, \cite[Example 1.4]{Wil14}}]\label{example: finite generation} 
The sequence $R = \{k[x_1,\ldots,x_n]\}$ of polynomial rings are not finitely generated as an FI-module; indeed no ring $k[x_1,\ldots,x_n]$ is a finite dimensional $k$-vector space.  However when graded by polynomial degree, each graded part of the sequence $R$ is finitely generated as an FI-module.  For instance the graded part of degree $3$ is generated by $x_1^3$, $x_1^2x_2$, and $x_1x_2x_3$ since every monomial of degree three is obtained by permuting indices of one of these three.  More generally the graded part of degree $d$ is generated by all monomials of the form $x_1^{d_1}x_2^{d_2}\cdots x_n^{d_n}$ over partitions $d_1 \geq d_2 \geq \cdots \geq d_n \geq 0$ of $d$.  
\end{example}

For completeness, we define the quotient of an FI-algebra or co-FI-algebra.

\begin{definition}[{\cite[Definition~2.76]{CEFa}}]
Let $R$ be a graded FI-algebra.  If $I$ is a graded FI-submodule (respectively co-FI-submodule) for which each object $I_n$ is a homogeneous ideal in $R_n$ then $I$ is called an \textit{FI-ideal} (respectively \textit{co-FI-ideal}).  The quotient FI-module $R/I$ is defined so that $(R/I)_n=R_n/I_n$ for each $n$ (respectively co-FI).
\end{definition}

The following proposition is our main tool, and sketches the main points of \cite[Theorem F]{CEFN}.  This result implies that we only need to prove each sequence of Tanisaki ideals forms a co-FI-submodule; after that, a straightforward algebraic argument allows us allow us to conclude that sequences of Springer representations are representation-stable. 

\begin{proposition}\label{prop: quotient is graded co-FI}
Let $R$ be a graded co-FI-module over a Noetherian ring and let $I$ be a co-FI-ideal.  Then the dual $\left(R/I\right)^\vee$ is a graded FI-module.  If $R^\vee$ has finite type then so does $\left(R/I\right)^\vee$.
\end{proposition}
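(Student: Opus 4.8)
The plan is to deduce everything from the corresponding facts about FI-modules by dualizing, using the fact that the dual of a co-FI-module is an FI-module (and vice versa), which is the observation recorded right after the definition of co-FI-module. First I would take the short exact sequence of graded co-FI-modules
\[
0 \longrightarrow I \longrightarrow R \longrightarrow R/I \longrightarrow 0,
\]
which makes sense because $I$ is a co-FI-ideal, hence in particular a graded co-FI-submodule, and the co-FI category is abelian so quotients exist and are computed objectwise. Applying the (contravariant, exact on finite-dimensional pieces) duality functor $(-)^\vee$ degreewise turns this into a short exact sequence of graded FI-modules
\[
0 \longrightarrow (R/I)^\vee \longrightarrow R^\vee \longrightarrow I^\vee \longrightarrow 0.
\]
In particular $(R/I)^\vee$ is a graded FI-submodule of the graded FI-module $R^\vee$; this already gives the first assertion.

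For the finite type claim, I would work one graded degree at a time. Fix $i$ and consider the degree-$i$ piece of the above sequence: $(R/I)^{\vee,i}$ is an FI-submodule of $R^{\vee,i}$. By hypothesis $R^\vee$ has finite type, so $R^{\vee,i}$ is a finitely generated FI-module over the Noetherian ring $k$. Now invoke the Noetherian property of the category of FI-modules over a Noetherian ring — this is exactly \cite[Theorem A / Theorem F]{CEFN} (building on the characteristic-zero case in \cite{CEF}), which the excerpt flags as the content it is sketching — to conclude that any sub-FI-module of a finitely generated FI-module is itself finitely generated. Hence $(R/I)^{\vee,i}$ is finitely generated for every $i$, which is precisely the statement that $(R/I)^\vee$ is a graded FI-module of finite type.

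The only genuinely substantive input is the FI-Noetherianity theorem of Church–Ellenberg–Farb–Nagpal; everything else is formal manipulation in an abelian category together with the elementary behavior of degreewise $k$-linear duality on finite-dimensional graded pieces. So the main obstacle to a fully self-contained argument would be reproving FI-Noetherianity, but since the proposition is explicitly allowed to cite \cite{CEFN}, the proof reduces to (i) verifying that duality is exact here — which holds because each graded piece $R_n^i$ is a finite-dimensional $k$-vector space, so $(-)^\vee$ is exact on the relevant subcategory — and (ii) correctly bookkeeping the variance: a co-FI-submodule $I \subseteq R$ dualizes to a quotient $R^\vee \twoheadrightarrow I^\vee$, while the quotient $R/I$ dualizes to a submodule $(R/I)^\vee \hookrightarrow R^\vee$. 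One should also note, for the "graded" bookkeeping, that $R^\vee$ and $I^\vee$ inherit their gradings from $R$ and $I$ with $(R^\vee)^i = (R^i)^\vee$, so "finite type" is preserved verbatim under the duality.
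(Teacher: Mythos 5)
Your proposal is correct and takes essentially the same approach as the paper: identify $(R/I)^\vee$ (degreewise) as an FI-submodule of $R^\vee$ and then invoke the Noetherian property of finitely generated FI-modules from \cite[Theorem A]{CEFN}. The paper phrases the submodule identification directly as the annihilator of $(I_n)_j$ inside $(R_n)^\vee_j$, whereas you phrase it via dualizing the short exact sequence $0 \to I \to R \to R/I \to 0$; these are the same observation.
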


\begin{proof}
The dual of a graded co-FI-module is a graded FI-module by purely formal properties.  Suppose further that $R^\vee$ has finite type and consider the $j^{\mathrm{th}}$ graded parts $\left(R_n\right)_j^\vee$ and $\left(R_n/I_n\right)_j$ for each $n$.  The dual $\left(R_n/I_n\right)_j^\vee$ is the FI-submodule of $\left(R_n\right)_j^\vee$ consisting of those functionals that vanish on the $j^{\mathrm{th}}$ graded parts $\left(I_n\right)_j = I_n \cap \left(R_n\right)_j$ by definition of graded quotients. Each FI-submodule of an FI-module of finite type over a Noetherian ring is also of finite type \cite[Theorem A]{CEFN}, thus proving the claim.
\end{proof}

In Section~\ref{sec:combinatorial_consequences} and \ref{sec:open_questions} of this manuscript we analyze properties of FI-modules in the case of Springer representations.

%%%%%%%%%%%%%%%%%%%%%%%%%%%%%%%%%%%%%%%%%%%%
%%%%%%%%%%%%%%%%%%%%%%%%%%%%%%%%%%%%%%%%%%%%
%%               Section 3                %%
%%%%%%%%%%%%%%%%%%%%%%%%%%%%%%%%%%%%%%%%%%%%
%%%%%%%%%%%%%%%%%%%%%%%%%%%%%%%%%%%%%%%%%%%%

\section{Springer theory}\label{sec:Springer_theory}

This section summarizes two key combinatorial tools:  Biagioli-Faridi-Rosas's description of generators for each Tanisaki ideal and Garsia-Procesi's description of a basis for the Springer representation.  

\subsection{The Tanisaki ideal and its generators}  We use Garsia-Procesi's presentation of the cohomology of the Springer fiber, which describes the cohomology as a quotient of a polynomial ring analogous to the Borel construction of the cohomology of the flag variety~\cite{Borel53}.  Let $R_n$ be the polynomial ring $\mathbb{C}[x_1, \ldots, x_n]$.  For each partition $\lambda$ of $n$ Tanisaki defined an ideal $I_{\lambda} \subseteq R_n$ that is now called the {\em Tanisaki ideal}. To describe the Tanisaki ideal, we define certain sets of elementary symmetric functions.  

\begin{definition}
Given a subset $S \subseteq \{x_1, \ldots, x_n\}$ the elementary symmetric function $e_i(S)$ is the polynomial
\[e_i(S) = \sum_{\mbox{\scriptsize $\begin{array}{c}T \textup{ such that }\\ T \subseteq S \textup{ and } |T|=i \end{array}$}} \prod_{x_j \in T} x_j.\]
The set $E_{i,j}^n$ is the set of elementary symmetric functions $e_i(S)$ over all possible subsets $S\subseteq \{x_1, \ldots, x_n\}$ of cardinality $j$.
\end{definition}

\begin{example}
Let $S = \{x_1, x_3, x_4\} \subseteq \{x_1, x_2, \ldots, x_5\}$.  Then $e_2(S) = x_1x_3 + x_1x_4 + x_3x_4$.  Letting $S$ run over all $\binom{5}{3}=10$ size three subsets of $\{x_1, x_2, \ldots, x_5\}$ gives the 10 elementary symmetric polynomials $e_2(S)$ comprising the set $E_{2,3}^5$.
\end{example}

We follow Biagioli, Faridi and Rosas's construction of the Tanisaki ideal~\cite[Definition 3.4]{BFR08}.

\begin{definition}
Let $\lambda = (\lambda_1, \ldots, \lambda_k)$ be a partition of $n$.  The \textit{BFR-filling} of $\lambda$ is constructed as follows.  From the leftmost column of $\lambda$ to the rightmost column, place the numbers $1, 2, \ldots, n-\lambda_1$ bottom to top skipping the top row.  Finally fill the top row from right to left with the remaining numbers $n-\lambda_1+1, \ldots, n$.   The \textit{BFR-generators} are polynomials in the set $\Grf(\lambda)$ defined as the following union:  If the box filled with $i$ has $j$ in the top row of its column, then include the elements of $E_{i,j}^n$ in the set $\Grf(\lambda)$.
\end{definition}

\begin{example}
Consider the partition $\lambda = (4,2,2,1)$.  Then the BFR-filling of $\lambda$ is
$$
\ytableausetup{centertableaux,boxsize=1.25em}
\begin{ytableau}
9& 8 & 7 & 6 \\
3 & 5 \\
2 & 4 \\
1
\end{ytableau}
$$
and hence the set $\Grf(\lambda)$ is
$$ \Grf(\lambda) = \bigcup_{i \in \{1,2,3,9\}} \!\!\! E^9_{i,9} \; \cup \; \bigcup_{i \in \{4,5,8\}} E^9_{i,8} \; \cup \;  E^9_{7,7} \; \cup \; E^9_{6,6}.$$
The first union of sets arises from the first column, the second from the second, and so on.  Each set of the form $E^9_{i,9}$  contains the single elementary symmetric function $e_i(x_1, x_2, \ldots, x_9)$.

\end{example}

\begin{theorem}[Biagioli-Faridi-Rosas~{\cite[Corollary 3.11]{BFR08}}]
Given a partition $\lambda$ of $n$ the Tanisaki ideal $I_\lambda$ is generated by the set $\Grf(\lambda)$.
\end{theorem}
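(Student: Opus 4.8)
The plan is to prove the two set-theoretic inclusions $\left(\Grf(\lambda)\right) \subseteq I_\lambda$ and $I_\lambda \subseteq \left(\Grf(\lambda)\right)$ directly from Tanisaki's original generating set. Recall that $I_\lambda$ is generated by the elementary symmetric polynomials $e_r(S)$ with $|S| = k$ and $k - d_k(\lambda) < r \le k$, where $d_k(\lambda) = \lambda'_n + \lambda'_{n-1} + \cdots + \lambda'_{n-k+1}$ and $\lambda'$ is the conjugate partition padded with zeros; equivalently $I_\lambda = \sum (E_{r,k}^n)$ over all such indices $(r,k)$. First I would fix notation for the BFR-filling: index the columns of $\lambda$ by $c = 1, \ldots, \lambda_1$, so column $c$ has height $\lambda'_c$, its top box carries the value $j_c := n - c + 1$, and its $\lambda'_c - 1$ non-top boxes carry the consecutive integers in $[b_{c-1}+1, b_c]$, where $b_c := (\lambda'_1 + \cdots + \lambda'_c) - c$ (so $b_0 = 0$ and $b_{\lambda_1} = n - \lambda_1 =: m$). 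In this notation $\Grf(\lambda) = \bigcup_{c=1}^{\lambda_1}\bigl(E_{j_c,j_c}^n \cup \bigcup_{r=b_{c-1}+1}^{b_c} E_{r,j_c}^n\bigr)$.

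The next step is the combinatorial identity $d_{j_c}(\lambda) = \lambda'_c + \lambda'_{c+1} + \cdots = n - (\lambda'_1 + \cdots + \lambda'_{c-1})$, which gives $j_c - d_{j_c} = b_{c-1}$; together with the observation that $d_k(\lambda) > 0$ precisely when $k \in \{m+1, \ldots, n\} = \{j_{\lambda_1}, \ldots, j_1\}$, this shows that the Tanisaki generating set is exactly $\bigcup_{c=1}^{\lambda_1}\bigcup_{r=b_{c-1}+1}^{j_c} E_{r,j_c}^n$. The inclusion $\left(\Grf(\lambda)\right) \subseteq I_\lambda$ is then immediate: each $\Grf$-generator is an $E_{r,j_c}^n$ with $r \in [b_{c-1}+1, b_c] \cup \{j_c\} \subseteq [b_{c-1}+1, j_c]$, hence has a Tanisaki index.

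For the reverse inclusion I would use an elementary \emph{reduction lemma}: if $r \le k \le k' \le n$ then $E_{r,k'}^n \subseteq \left(E_{r,k}^n\right)$, since for $|T| = k'$ one has $\sum_{S \subseteq T,\, |S| = k} e_r(S) = \binom{k'-r}{k-r} e_r(T)$ and the binomial coefficient is invertible in $\mathbb{C}$. Granting this, take a Tanisaki generator $E_{r,j_c}^n$ with $b_{c-1}+1 \le r \le j_c$. If $r \le b_c$ or $r = j_c$ it already lies in $\Grf(\lambda)$, so assume $b_c + 1 \le r \le j_c - 1$. If $r \ge m+1$, then $r = j_{c'}$ for $c' := n - r + 1 \le \lambda_1$, so $E_{r,r}^n$ is the top-box generator of column $c'$ and the reduction lemma (with $k = r$, $k' = j_c$) places $E_{r,j_c}^n$ inside $\left(\Grf(\lambda)\right)$. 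If instead $r \le m$, then because the intervals $[b_{c'-1}+1, b_{c'}]$ partition $\{1, \ldots, m\}$ there is a unique $c'$ with $r \in [b_{c'-1}+1, b_{c'}]$, so $E_{r,j_{c'}}^n \in \Grf(\lambda)$; moreover $r > b_c$ forces $b_{c'} > b_c$, hence $c' > c$, hence $r \le b_{c'} < j_{c'} < j_c$, so the reduction lemma applies again with $k = j_{c'}$, $k' = j_c$. In every case $E_{r,j_c}^n \subseteq \left(\Grf(\lambda)\right)$, which finishes the argument.

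The hard part will not be any single step but keeping the interval arithmetic of the BFR-filling aligned with the values of $d_{j_c}$ — in particular establishing $c' > c$ in the last case, which is exactly what forces the reduction $E_{r,j_c}^n \subseteq \left(E_{r,j_{c'}}^n\right)$ to go \emph{downward} in subset size and so to land inside the ideal generated by $\Grf(\lambda)$. One also has to check that Tanisaki's original definition matches the $d_k$ used here and that no Tanisaki generator occurs at a subset size outside $\{j_{\lambda_1}, \ldots, j_1\}$; both follow from the identity for $d_k$. Alternatively, the statement may simply be quoted from \cite[Corollary 3.11]{BFR08}, whose proof follows this outline.
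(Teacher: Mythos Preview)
The paper does not give its own proof of this theorem: it is stated as a quotation of Biagioli--Faridi--Rosas \cite[Corollary~3.11]{BFR08} and used as a black box. Your proposal therefore goes well beyond what the paper does, and the argument you sketch is correct. The bookkeeping on the BFR-filling checks out (in particular $j_c - d_{j_c}(\lambda) = b_{c-1}$ and the fact that the Tanisaki indices $k$ with $d_k > 0$ are exactly $\{j_1,\ldots,j_{\lambda_1}\}$), and your case analysis for the reverse inclusion is clean; the step ``$r>b_c$ and $r\le b_{c'}$ forces $c'>c$'' is fine because $c\mapsto b_c$ is nondecreasing. It is worth noting that your \emph{reduction lemma} $E_{r,k'}^n\subseteq\bigl(E_{r,k}^n\bigr)$ for $r\le k\le k'$ is exactly the content of Lemma~\ref{lemma: E_ij contains E_ij+1} and Corollary~\ref{corollary: does the trick}, which the paper proves later for a different purpose (showing the Tanisaki ideals form a co-FI-ideal); so the key algebraic tool is already present in the paper, just not deployed here.
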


\subsection{Combinatorial presentation for the Springer representation and the Garsia-Procesi basis}
 It turns out that the natural $S_n$-action on $R_n$ given by $$w \cdot p(x_1, \ldots, x_n) = p(x_{w(1)}, \ldots, x_{w(n)})$$ restricts to the Tanisaki ideal $I_{\lambda}$.  Thus the $S_n$-action on $R_n$ induces an $S_n$-action on $R_n/I_{\lambda}$.  The key result of Garsia-Procesi's work (with others \cite{Kra80,dCP81,Tani82}) is that this quotient is the cohomology of the Springer variety.

\begin{proposition}[Garsia-Procesi {\cite{GarPro92}}] Let $R_n = \mathbb{C}[x_1, \ldots, x_n]$ and let $I_\lambda$ be the Tanisaki ideal corresponding to the partition $\lambda$.  The quotient $R_n/I_{\lambda}$ is isomorphic to the cohomology of the Springer variety $H^*(Spr_{\lambda})$ as a graded $S_n$-representation.
\end{proposition}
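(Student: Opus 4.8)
The plan is to derive this proposition from the literature, since it is in essence the main result of Garsia--Procesi~\cite{GarPro92}, which itself rests on De Concini--Procesi~\cite{dCP81}, Tanisaki~\cite{Tani82}, and Kraft~\cite{Kra80}. The only point that is genuinely self-contained is the assertion that the $S_n$-action descends to the quotient: the action $w\cdot x_i = x_{w(i)}$ sends each elementary symmetric function $e_i(S)$ to $e_i(wS)$, so each set $E_{i,j}^n$ is preserved setwise, hence the Tanisaki ideal $I_\lambda$ (generated by a union of such sets, via the BFR-generators $\Grf(\lambda)$) is $S_n$-stable, and the $S_n$-action therefore passes to a graded action on $R_n/I_\lambda$. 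So the statement is well posed, and what remains is to show that $R_n/I_\lambda$ and $H^*(Spr_{\lambda})$ agree as graded $S_n$-modules.

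There are two complementary routes to this, and I would cite both. Geometrically, De Concini--Procesi identify $H^*(Spr_{\lambda})$ with the coordinate ring of a scheme-theoretic slice through the nilpotent orbit closure $\overline{\mathcal{O}_\lambda}$, and Tanisaki computes the defining ideal of that slice to be precisely $I_\lambda$; this already yields an isomorphism of graded $S_n$-algebras $H^*(Spr_{\lambda})\cong R_n/I_\lambda$. Combinatorially---and this is the route that supplies the tools used later in this paper, and the step I expect to be the real work---Garsia--Procesi compute the graded $S_n$-character of $R_n/I_\lambda$ directly: they exhibit the explicit monomial basis reviewed in the next subsection, which pins down the Hilbert series, and they introduce linear maps relating $R_n/I_\lambda$ to the rings $R_{n-1}/I_{\lambda'}$ for partitions $\lambda'$ obtained by deleting a corner cell, thereby showing that the graded multiplicities of the irreducibles $V(\mu)$ satisfy the Morris recurrence characterizing the Kostka--Foulkes polynomials $\widetilde{K}_{\mu\lambda}(q)$, with immediate base cases. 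Since in characteristic zero a graded $\mathbb{C}[S_n]$-module is determined up to graded isomorphism by its graded character, matching this against the classical graded character of the Springer representation completes the argument.

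I would close with one bookkeeping remark pointing to Remark~\ref{rem:Hotta}: since the Springer representation and its dual are routinely conflated in the literature, we fix once and for all that ``the'' Springer representation here is the Garsia--Procesi model $R_n/I_\lambda$, so that the isomorphism above holds on the nose with this convention, and agrees with Springer's original construction up to the standard dualization.
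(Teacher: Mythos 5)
The paper offers no proof of this proposition; it is stated as a citation to Garsia--Procesi (with the supporting references to Kraft, De~Concini--Procesi, and Tanisaki), so there is no in-paper argument to compare against. Your write-up is a correct and well-organized account of how the result is assembled in the cited literature: the observation that $I_\lambda$ is $S_n$-stable (each $E_{i,j}^n$ is permuted setwise, so the BFR generating set is preserved) so the action descends to the quotient, the geometric identification of $R_n/I_\lambda$ with $H^*(Spr_\lambda)$ via De~Concini--Procesi and Tanisaki, and the combinatorial verification by Garsia--Procesi that the graded multiplicities match the Kostka--Foulkes polynomials. One small caution on your closing remark: the duality ambiguity flagged in Remark~\ref{rem:Hotta} concerns which of the two mutually dual $S_n$-actions on $H^*(Spr_\lambda)$ one calls ``the'' Springer representation, not whether the ring $R_n/I_\lambda$ is the cohomology ring; the ring isomorphism holds unconditionally, and the convention only matters when one later asks which module is stable versus co-stable.
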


Moreover Garsia-Procesi describe an algorithm to compute a nice basis $\mathcal{B}(\lambda)$ of monomials for $R_n/I_{\lambda}$.  Our exposition owes much to the presentation in the first author's work \cite[Definition 2.3.2]{Mbir10}.

\begin{definition}[Garsia-Procesi {\cite[Section 1]{GarPro92}}]
If $\lambda = (1)$ is the unique partition of $1$ then $\mathcal{B}(\lambda) = \{1\}$.  If $n>1$ and $\lambda$ is a partition of $n$ with $k$ parts then:
\begin{itemize}
\item Number the rightmost box in the $i^{\mathrm{th}}$ row of $\lambda$ with $i$ for each $i \in \{1,\ldots,k\}$.
\item For each $i \in \{1,\ldots,k\}$ construct the partition $\lambda_i$ of $n-1$ by erasing the box labeled $i$ and rearranging rows if needed to obtain a Young diagram once again.
\item Recursively define $\mathcal{B}(\lambda)$ as \[ \mathcal{B}(\lambda) = \bigcup_{i=1}^k x_n^{i-1} \hspace{0.25em} \mathcal{B}(\lambda_i).\]
\end{itemize}
From the \textit{GP-algorithm} above we construct the \textit{GP-tree} as follows.  Let $\lambda$ sit alone at Level $n$ in a rooted tree directed down.  Since $\lambda$ has $k$ parts, create $k$ edges labeled $x_n^0, x_n^1, \ldots, x_n^{k-1}$ left-to-right to the $k$ subdiagrams $\lambda_i$ for each $i \in \{1, \ldots, k\}$ in Level $n-1$.  For each of these subdiagrams and their descendants, recursively repeat this process.  The process ends at Level $1$, whose diagrams all contain one single box.  Multiplying the edge labels on any downward path gives a unique \textit{GP-monomial} in the \textit{GP-basis}.
\end{definition}

\begin{example}\label{example: GP-tree}
We illustrate the first two steps of the GP-algorithm on the partition  of 5 given by $\lambda = (2,2,1)$.  Number the far-right boxes and branch down from Level 5 to Level 4 of the recursion as follows:
$$
\xymatrix{
\mathrm{Level \; 5} & & {\ytableausetup{centertableaux,boxsize=.75em}
\begin{ytableau} {} & {\mbox{\tiny 1}} \\ {} & {\mbox{\tiny 2}} \\ {\mbox{\tiny 3}} \end{ytableau}} \ar[dl]_{1} \ar[d]^{x_5} \ar[dr]^{x_5^2} \\
\mathrm{Level \; 4} & {\ytableausetup{centertableaux,boxsize=.75em}
\begin{ytableau} {} \\ {} & {} \\ {} \end{ytableau}} & {\ytableausetup{centertableaux,boxsize=.75em}
\begin{ytableau} {} & {} \\ {} \\ {} \end{ytableau}} & {\ytableausetup{centertableaux,boxsize=.75em}
\begin{ytableau} {} & {} \\ {} & {} \end{ytableau}}
}.
$$
After rearranging rows to obtain a Young diagram, we begin the recursion again on each of the three partitions to produce Level 3 of the tree.
%
% Put this commented code in the xymatrix below if you want to include Level 1 again.
%
%\mathrm{Level \; 1} & & & & & {\ytableausetup{centertableaux,boxsize=.75em}
%\begin{ytableau} {} & {} \\ {} & {} \\ {} \end{ytableau}} \ar[dl]|{1} \ar[d]|{x_5} %\ar[dr]|{x_5^2} \\
$$
\xymatrixcolsep{.225in}
\xymatrix{
\mathrm{Level \; 4} & & & & {\ytableausetup{centertableaux,boxsize=.75em}
\begin{ytableau} {} & {} \\ {} \\ {} \end{ytableau}} \ar[dlll]|{1} \ar[dll]|{x_4} \ar[dl]|{x_4^2} & {\ytableausetup{centertableaux,boxsize=.75em}
\begin{ytableau} {} & {} \\ {} \\ {} \end{ytableau}} \ar[dl]|{1} \ar[d]|{x_4} \ar[dr]|{x_4^2} & {\ytableausetup{centertableaux,boxsize=.75em}
\begin{ytableau} {} & {} \\ {} & {} \end{ytableau}} \ar[dr]|{1} \ar[drr]|{x_4} \\
\mathrm{Level \; 3} & {\ytableausetup{centertableaux,boxsize=.75em}
\begin{ytableau} {} \\ {} \\ {} \end{ytableau}} & {\ytableausetup{centertableaux,boxsize=.75em}
\begin{ytableau} {} & {} \\ {} \end{ytableau}} & {\ytableausetup{centertableaux,boxsize=.75em}
\begin{ytableau} {} & {} \\ {} \end{ytableau}} & {\ytableausetup{centertableaux,boxsize=.75em}
\begin{ytableau} {} \\ {} \\ {} \end{ytableau}} & {\ytableausetup{centertableaux,boxsize=.75em}
\begin{ytableau} {} & {} \\ {} \end{ytableau}} & {\ytableausetup{centertableaux,boxsize=.75em}
\begin{ytableau} {} & {} \\ {} \end{ytableau}} & {\ytableausetup{centertableaux,boxsize=.75em}
\begin{ytableau} {} & {} \\ {} \end{ytableau}} & {\ytableausetup{centertableaux,boxsize=.75em}
\begin{ytableau} {} & {} \\ {} \end{ytableau}}
}
$$
By Level 1, there will be 30 diagrams, each equal to the partition $(1)$. Recovering the Garsia-Procesi basis from this tree is equivalent to multiplying the edge labels of the 30 paths.  The reader can verify that we obtain the following basis $\mathcal{B}(\lambda)$:
\begin{center}
\begin{tabular}{c|c|l}
degree & \# & \hspace{1in} monomials in $\mathcal{B}(\lambda)$ \\ \hline \hline
0 & 1 & 1 \\ \hline
1 & 4 & $x_i$ for $2 \leq i \leq 5$ \\ \hline
2 & 9 & $x_i^2$ for $3 \leq i \leq 5$ and $x_ix_j$ for $2 \leq i<j \leq 5$\\ \hline
3 & 11 & $x_i x_j x_5$ for $2 \leq i<j \leq 4$,\\
  &    & $x_i x_j^2$ for $2 \leq i < j \leq 5$, and\\
  &    & $x_i^2 x_5$ for $3 \leq i \leq 4$\\ \hline
4 & 5 & $x_i x_j^2 x_5$ for $2 \leq i<j \leq 4$ and $x_i x_4 x_5^2$ for $2 \leq i \leq 3$\\
\hline
\end{tabular}
\end{center}
\end{example}

Garsia-Procesi bases have several nice containment properties that we use when analyzing the FI- and co-FI-structure of Springer representations.  The first describes the relationship between dominance order and the Garsia-Procesi bases.

\begin{proposition}[Garsia-Procesi {\cite[Proposition 4.1]{GarPro92}}] \label{proposition: dominance order}
Let $\lambda$ and $\lambda'$ be partitions of $n$ and suppose that $\lambda \unlhd \lambda'$ in dominance order, namely that we have $\lambda_1+\lambda_2+\cdots+\lambda_i \leq \lambda'_1+\lambda'_2+\cdots+\lambda'_i$ for all $i\geq 1$.  Then $\mathcal{B}(\lambda') \subseteq \mathcal{B}(\lambda)$.
\end{proposition}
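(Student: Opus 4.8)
The plan is to induct on $n = |\lambda| = |\lambda'|$ using the recursive definition of the Garsia--Procesi basis. The base case $n=1$ is immediate, since there is a unique partition of $1$. For the inductive step, set $k = \ell(\lambda)$ and $k' = \ell(\lambda')$; since $\lambda \unlhd \lambda'$ forces $\ell(\lambda) \geq \ell(\lambda')$, we have $k \geq k'$. Writing $\lambda_i$ (and likewise $\lambda'_i$) for the partition of $n-1$ obtained by deleting the box labeled $i$ and rearranging, the GP-algorithm gives
\[\mathcal{B}(\lambda) = \bigcup_{i=1}^{k} x_n^{i-1}\,\mathcal{B}(\lambda_i), \qquad \mathcal{B}(\lambda') = \bigcup_{i=1}^{k'} x_n^{i-1}\,\mathcal{B}(\lambda'_i).\]
Every monomial in $\mathcal{B}(\lambda_i)$ lies in $\mathbb{C}[x_1,\ldots,x_{n-1}]$, so the monomials of $x_n^{i-1}\mathcal{B}(\lambda_i)$ are exactly those of $x_n$-degree $i-1$; hence each union above is graded by $x_n$-degree. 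Consequently $\mathcal{B}(\lambda') \subseteq \mathcal{B}(\lambda)$ will follow once we check that $\mathcal{B}(\lambda'_i) \subseteq \mathcal{B}(\lambda_i)$ for each $i \in \{1,\ldots,k'\}$ --- and here $i \leq k' \leq k$, so the term $x_n^{i-1}\mathcal{B}(\lambda_i)$ genuinely appears in $\mathcal{B}(\lambda)$.

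By the inductive hypothesis, it therefore suffices to prove the combinatorial lemma: \emph{if $\lambda \unlhd \lambda'$ are partitions of $n$ and $1 \leq i \leq \ell(\lambda')$, then $\lambda_i \unlhd \lambda'_i$}. I would prove this by bookkeeping partial sums. Write $s_\ell(\mu) = \mu_1 + \cdots + \mu_\ell$ and $r(\mu,i) = \max\{\ell : \mu_\ell = \mu_i\}$; deleting the box labeled $i$ and re-sorting is the same as decrementing the part in row $r(\mu,i)$ by one, so $s_\ell(\mu_i) = s_\ell(\mu) - [\ell \geq r(\mu,i)]$. Given $s_\ell(\lambda) \leq s_\ell(\lambda')$ for all $\ell$, the inequality $s_\ell(\lambda_i) \leq s_\ell(\lambda'_i)$ can fail only when $r(\lambda',i) \leq \ell < r(\lambda,i)$ while $s_\ell(\lambda) = s_\ell(\lambda')$, and the heart of the argument is to rule this out. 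From $\ell < r(\lambda,i)$ we get $\lambda_{\ell+1} = \lambda_i =: c$; comparing $s_{\ell+1}$ forces $\lambda'_{\ell+1} \geq c$, while $\ell \geq r(\lambda',i)$ gives $\lambda'_{\ell+1} < \lambda'_i =: c'$, so $c < c'$. One then treats the cases $\ell = r(\lambda',i)$ and $\ell > r(\lambda',i)$: in both, $\lambda$ is constantly $c$ and $\lambda'$ is constantly $c'$ on the interval $[i, r(\lambda',i)]$, and pushing the strict inequality $c < c'$ through this block (using $s_{i-1}(\lambda) \leq s_{i-1}(\lambda')$) gives $s_{r(\lambda',i)}(\lambda) < s_{r(\lambda',i)}(\lambda')$; in the second case the remaining parts $\lambda'_{r(\lambda',i)+1}, \ldots, \lambda'_\ell$ are each at least $c$, which equals each of $\lambda_{r(\lambda',i)+1}, \ldots, \lambda_\ell$, so the strictness propagates up to index $\ell$. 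Either way $s_\ell(\lambda) < s_\ell(\lambda')$, contradicting $s_\ell(\lambda) = s_\ell(\lambda')$.

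The main obstacle is exactly this lemma, and within it the degenerate case where the partial sums of $\lambda$ and $\lambda'$ coincide at an index strictly between $r(\lambda',i)$ and $r(\lambda,i)$: the hypothesis $s_\ell(\lambda) \leq s_\ell(\lambda')$ is by itself too weak there, and one must exploit the constancy of $\lambda$ on $[i, r(\lambda,i)]$ and invoke dominance at a neighboring index to upgrade it to a strict inequality. The remaining ingredients --- the reduction to the branches $\lambda_i, \lambda'_i$, the $x_n$-grading of the unions, and the induction itself --- are routine.
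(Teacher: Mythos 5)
The paper does not prove this proposition; it is cited directly to Garsia--Procesi~\cite[Proposition~4.1]{GarPro92}, so there is no in-paper argument to compare against. Your proof is correct and self-contained, and the overall strategy --- induct on $n$ via the GP recursion and reduce to the combinatorial lemma that dominance is preserved by each branching step --- is the natural one and close in spirit to the argument in~\cite{GarPro92}. The reduction is sound: since $\lambda\unlhd\lambda'$ forces $\ell(\lambda)\geq\ell(\lambda')$, the terms $x_n^{i-1}\mathcal{B}(\lambda_i)$ for $i\leq\ell(\lambda')$ genuinely appear in the union for $\mathcal{B}(\lambda)$ (the $x_n$-grading is not even strictly needed for the inclusion direction, only that each piece of the $\lambda'$-union is matched by the corresponding piece of the $\lambda$-union). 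Your key lemma is also correct, and your analysis identifies exactly the right obstruction: using $s_\ell(\mu_i)=s_\ell(\mu)-[\ell\geq r(\mu,i)]$, the only danger is $r(\lambda',i)\leq\ell<r(\lambda,i)$ with $s_\ell(\lambda)=s_\ell(\lambda')$. Your derivation of $c<c'$ and the contradiction via $s_\ell(\lambda')-s_\ell(\lambda)=\bigl(s_{i-1}(\lambda')-s_{i-1}(\lambda)\bigr)+\sum_{j=i}^\ell(\lambda'_j-c)$, where the sum is strictly positive because $\lambda'_j=c'>c$ on $[i,r(\lambda',i)]$ and $\lambda'_j\geq c$ on $(r(\lambda',i),\ell]$, is valid. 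Two small things worth making explicit when you write this up: the bad configuration forces $\ell\geq i$ (since $\ell\geq r(\lambda',i)\geq i$), so $\lambda_{\ell+1}=c$ really is available; and the comparison at $s_{\ell+1}$ uses $\lambda'_{\ell+1}$, which could a priori be $0$, but $\lambda'_{\ell+1}\geq c=\lambda_i\geq 1$ rules that out. With those noted, the proof is complete.
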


The second describes the relationship between containment of Young diagrams and the Garsia-Procesi bases.

\begin{lemma}\label{lemma: containment and Garsia-Procesi basis}
If $\lambda \subseteq \lambda'$ then $\mathcal{B}(\lambda) \subseteq \mathcal{B}(\lambda')$.
\end{lemma}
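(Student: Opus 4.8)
\emph{Strategy.} The plan is to reduce to the case $|\lambda'| = |\lambda| + 1$, and then, in that case, to combine the recursive definition of the Garsia--Procesi basis with the dominance-order statement of Proposition~\ref{proposition: dominance order}.

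\emph{Reduction to adding one box.} Starting from $\lambda'$, I would repeatedly delete a removable corner of the current Young diagram that does \emph{not} lie in $\lambda$; such a corner always exists as long as the current diagram properly contains $\lambda$ (take the lowest row in which the current diagram and $\lambda$ differ --- its rightmost box is a removable corner, and deleting it leaves a Young diagram still containing $\lambda$). Since the size drops by one at each step, this produces a chain of Young diagrams $\lambda = \mu^{(0)} \subseteq \mu^{(1)} \subseteq \cdots \subseteq \mu^{(r)} = \lambda'$ with $|\mu^{(t+1)}| = |\mu^{(t)}| + 1$ for every $t$. Because set containment is transitive, it then suffices to treat a single step: to show $\mathcal{B}(\lambda) \subseteq \mathcal{B}(\lambda')$ whenever $\lambda$ is a partition of $n$, $\lambda'$ is a partition of $n+1$, and $\lambda \subseteq \lambda'$.

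\emph{The single-box step.} Let $L = \lambda'_1$ and let $p$ be the number of parts of $\lambda'$ equal to $L$. Let $\mu$ be the first subdiagram produced by the GP-algorithm applied to $\lambda'$: erasing the rightmost box of the top row and re-sorting yields the Young diagram obtained from $\lambda'$ by removing its corner box in row $p$, column $L$. Thus $\mu$ is a partition of $n$, and directly from the recursive formula for the GP-basis (the branch with edge label $x_{n+1}^0$) we get $\mathcal{B}(\mu) \subseteq \mathcal{B}(\lambda')$. It remains to prove $\mathcal{B}(\lambda) \subseteq \mathcal{B}(\mu)$, and by Proposition~\ref{proposition: dominance order} it is enough to check $\mu \unlhd \lambda$ in dominance order. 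Since $\lambda \subseteq \lambda'$ and $|\lambda'| = |\lambda| + 1$, every partial sum satisfies $\sum_{j \le i} \lambda_j \le \sum_{j \le i} \lambda'_j \le \sum_{j \le i} \lambda_j + 1$. Taking $i = p$ gives $\sum_{j \le p}(L - \lambda_j) \le 1$, so at most one of $\lambda_1, \dots, \lambda_p$ is smaller than $L$; since $\lambda$ is weakly decreasing this forces $\lambda_1 = \cdots = \lambda_{p-1} = L$. Consequently, for $i \le p-1$ we have $\sum_{j \le i}\mu_j = Li = \sum_{j \le i}\lambda_j$, while for $i \ge p$ we have $\sum_{j \le i}\mu_j = \sum_{j \le i}\lambda'_j - 1 \le \sum_{j \le i}\lambda_j$. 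Hence $\mu \unlhd \lambda$, which yields $\mathcal{B}(\lambda) \subseteq \mathcal{B}(\mu) \subseteq \mathcal{B}(\lambda')$.

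\emph{Where the difficulty lies.} The naive hope --- that $\lambda$ is already contained in the first GP-subdiagram of $\lambda'$, so that one could ``peel off the top of $\lambda'$'' and induct directly on the recursive structure --- is false; for instance the first subdiagram of $(2,2,1)$ is $(2,1,1)$, which does not contain $(2,2)$. The point is that in exactly these cases the first subdiagram $\mu$ instead satisfies $\mu \lhd \lambda$ strictly in dominance order, so Proposition~\ref{proposition: dominance order} still supplies $\mathcal{B}(\lambda) \subseteq \mathcal{B}(\mu)$. Establishing the comparison $\mu \unlhd \lambda$ through the partial-sum bookkeeping above is the crux of the proof; everything else is formal.
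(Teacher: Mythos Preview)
Your proof is correct and follows essentially the same route as the paper: reduce to a single added box, observe that the first GP-subdiagram $\mu=\lambda_1$ of $\lambda'$ satisfies $\mathcal{B}(\mu)\subseteq\mathcal{B}(\lambda')$, and then use Proposition~\ref{proposition: dominance order} together with $\mu\unlhd\lambda$ to conclude. The only difference is cosmetic: the paper deduces $\mu\unlhd\lambda$ by noting that the GP-subdiagrams form a dominance chain $\lambda_1\unlhd\lambda_2\unlhd\cdots\unlhd\lambda_k$ and that $\lambda$ equals one of them, whereas you verify $\mu\unlhd\lambda$ directly via the partial-sum inequalities.
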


\begin{proof}
We prove the claim assuming that $\lambda'$ has exactly one more box than $\lambda$.  Repeating the argument gives the desired result.

Consider the subdiagrams $\lambda_1, \lambda_2, \ldots, \lambda_k$ obtained from $\lambda'$ in the recursive definition of the Garsia-Procesi algorithm.  The Garsia-Procesi algorithm says 
\[\mathcal{B}(\lambda') = \bigcup_{i=1}^k x_n^{i-1} \hspace{0.25em} \mathcal{B}(\lambda_i)  = \mathcal{B}(\lambda_1) \cup \bigcup_{i=2}^k x_n^{i-1} \hspace{0.25em} \mathcal{B}(\lambda_i) \]
so $\mathcal{B}(\lambda') \supseteq \mathcal{B}(\lambda_1)$.  By construction $\lambda_i$ is obtained from $\lambda'$ by removing a box that is above and possibly to the right of the box removed for $\lambda_{i+1}$ for each $i \in \{1,\ldots,k-1\}$.  In particular $\lambda_i \unlhd \lambda_{i+1}$ in dominance order.  Proposition \ref{proposition: dominance order} implies that $\mathcal{B}(\lambda_{i+1}) \subseteq \mathcal{B}(\lambda_i)$ for each $i$ and so $\mathcal{B}(\lambda') \supseteq \mathcal{B}(\lambda_i)$ for each $i$.  Since $\lambda$ is obtained from $\lambda'$ by removing a single box, we know $\lambda = \lambda_i$ for some $i$.  The claim follows.
\end{proof}

%%%%%%%%%%%%%%%%%%%%%%%%%%%%%%%%%%%%%%%%%%%%
%%%%%%%%%%%%%%%%%%%%%%%%%%%%%%%%%%%%%%%%%%%%
%%               Section 4                %%
%%%%%%%%%%%%%%%%%%%%%%%%%%%%%%%%%%%%%%%%%%%%
%%%%%%%%%%%%%%%%%%%%%%%%%%%%%%%%%%%%%%%%%%%%

\section{The Springer representations with FI-module structure}\label{sec:FI-module_structure}

Recall that the sequence of polynomial rings carries an FI-module structure and that both the Tanisaki ideal and the quotients $R_n/I_{\lambda}$ carry an $S_n$-action.  The question in this section is: do these fit together to give an FI-module structure on Springer representations?  Lemma \ref{lemma: containment and Garsia-Procesi basis} suggests that the answer could be yes, since it proved that if $\lambda \subseteq \lambda'$ then the Garsia-Procesi basis for $R_n/I_{\lambda}$ is contained in the Garsia-Procesi basis for $R_{n'}/I_{\lambda'}$.

We prove that this is misleading: the inclusion $R_n/I_{\lambda} \hookrightarrow R_{n'}/I_{\lambda'}$ in no way preserves the $S_n$ action (and is not what Church and Farb call a consistent sequence~\cite[pg.6]{CF}).  In particular we prove that there is {\em no} FI-module structure on sequences of Springer representations, except for the trivial representation.  Church, Ellenberg, and Farb observed that the sequence of ideals of symmetric functions with no constant term is not an FI-ideal (see Example~\ref{example: ideal of symmetric polys is not FI-module}); in our language, they study the special case of the sequence of Tanisaki ideals  $\{I_{\lambda_1}, I_{\lambda_2}, \ldots\}$ where each $\lambda_i$ is a column with $i$ boxes.

\begin{theorem}
For each positive integer $n$, let $\lambda_n$ denote a Young diagram with $n$ boxes.
The only sequence of Young diagrams $\lambda_1 \subseteq \lambda_2 \subseteq \lambda_3 \subseteq \cdots$ for which the sequence of Tanisaki ideals $\{I_{\lambda_1}, I_{\lambda_2}, I_{\lambda_3}, \ldots\}$ forms an FI-ideal is {\tiny \Yvcentermath1 $\yng(1) \rightarrow \yng(2) \rightarrow \yng(3) \rightarrow \cdots$}.
\end{theorem}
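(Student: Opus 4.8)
The statement has two halves: the row sequence $(1)\subseteq(2)\subseteq(3)\subseteq\cdots$ gives an FI-ideal, and no other sequence does. I would prove them in turn.

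For the first half, the key observation is that $I_{(n)}$ is the irrelevant maximal ideal $(x_1,\ldots,x_n)$. This is immediate from the Biagioli--Faridi--Rosas description: for $\lambda=(n)$ we have $n-\lambda_1=0$, so no labels are placed in the ``bottom to top'' stage and the single row is filled from the right with $1,2,\ldots,n$; each box labelled $i$ is then the top box of its own column, contributing $E_{i,i}^n$ to $\Grf((n))$, and in particular $E_{1,1}^n=\{x_1,\ldots,x_n\}\subseteq\Grf((n))$. Now any morphism $f\colon\mathbf m\to\mathbf n$ of {\bf FI} induces $f_*\colon R_m\to R_n$ with $f_*(x_i)=x_{f(i)}$, so $f_*(I_{(m)})=(x_{f(1)},\ldots,x_{f(m)})\subseteq(x_1,\ldots,x_n)=I_{(n)}$. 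Hence $\{I_{(n)}\}$ is a graded FI-submodule of $R$ whose objects are homogeneous ideals, i.e.\ an FI-ideal.

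For the second half, suppose $\{I_{\lambda_n}\}$ forms an FI-ideal; I claim $\lambda_n=(n)$ for every $n$. Since the only Young diagram with one box is a single box, $\lambda_1=(1)$ and $I_{\lambda_1}=I_{(1)}=(x_1)$ (again directly from the BFR rule, or because the $n=1$ Springer fiber is a point). Fix $n$ and apply the FI-ideal condition to the injection $\iota\colon\mathbf 1\to\mathbf n$, $\iota(1)=1$: this forces $x_1=\iota_*(x_1)\in I_{\lambda_n}$. So it remains to prove the purely local statement: \emph{if $\mu$ is a partition of $n$ with $x_1\in I_\mu$, then $\mu=(n)$}; granting this, $\lambda_n=(n)$ for all $n$.

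To prove that local statement I would look at the degree-one part of $I_\mu$. The BFR-generators are homogeneous, so $x_1\in I_\mu$ iff $x_1$ lies in the span of the degree-one generators, namely of the sets $E_{1,j}^n$ that occur; and since the label $1$ sits on exactly one box of the BFR-filling, exactly one such set occurs, with $j$ the label of the top box of the column containing the $1$. If $\mu$ has two or more rows, the $1$ is placed in the first column below the top row, whose leftmost box carries the label $n$, so $j=n$ and the only degree-one generator is $e_1(x_1,\ldots,x_n)=x_1+\cdots+x_n$; since $n\ge2$ in this case, $x_1$ is not a scalar multiple of it and $x_1\notin I_\mu$. Therefore $x_1\in I_\mu$ forces $\mu$ to have a single row. (One could also argue via the dimension formula $\dim_{\mathbb C}R_n/I_\mu=n!/\prod_i\mu_i!$ and the $S_n$-invariance of $I_\mu$: $x_1\in I_\mu$ implies all $x_i\in I_\mu$, hence $R_n/I_\mu=\mathbb C$, hence $\prod_i\mu_i!=n!$, which happens only for one-part $\mu$.) The only point demanding care is this last step: one must know that the degree-one part of the ideal equals the span of its degree-one generators (clear, as the generators are homogeneous) and must correctly locate the single box labelled $1$ in the BFR-filling; everything else is routine.
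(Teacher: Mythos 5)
Your proposal is correct, and it proves both directions explicitly (the paper only spells out the ``only'' direction, leaving the positive direction implicit), but the mechanism you use for the key direction is genuinely different from the paper's. The paper applies the FI condition to the step morphism $f\colon\{1,\ldots,n-1\}\to\{1,\ldots,n\}$, compares the BFR-fillings of $\lambda_{n-1}$ and $\lambda_n$ at the box labeled $1$, and---when both diagrams have at least two rows---produces a variable in $I_{\lambda_n}$ via the subtraction trick $e_1(x_1,\ldots,x_n)-f_*(e_1(x_1,\ldots,x_{n-1}))=x_j$; it then invokes $S_n$-invariance to conclude $I_{\lambda_n}=\langle x_1,\ldots,x_n\rangle$, and reads off that $\lambda_n$ is a single row because the Springer quotient is trivial. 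You instead apply the FI condition directly to the morphism $\mathbf{1}\to\mathbf{n}$, which immediately gives $x_1\in I_{\lambda_n}$ since $I_{(1)}=(x_1)$, and then close the argument by inspecting the degree-one slice of the BFR generating set: for a diagram with at least two rows the unique box labeled $1$ sits in a column whose top carries the label $n$, so the only degree-one generator is $e_1(x_1,\ldots,x_n)$, whose scalar multiples do not include $x_1$ when $n\ge 2$. This is a cleaner route that avoids the paper's case split over whether $\lambda$ or $\lambda'$ has one row and avoids the subtraction computation; your parenthetical alternative via $S_n$-invariance plus the dimension formula is essentially the paper's closing step. One tiny imprecision in your positive direction: $f_*(I_{(m)})$ is the set-theoretic image under a ring map, not the ideal $(x_{f(1)},\ldots,x_{f(m)})$ itself, but the containment $f_*(I_{(m)})\subseteq(x_1,\ldots,x_n)=I_{(n)}$ is what you need and is correct.
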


\begin{proof}
Suppose that $\lambda$ is a partition of $n-1$ and $\lambda'$ is a partition of $n$ with $\lambda \subseteq \lambda'$.  We prove that if $f: \{1,\ldots,n-1\} \rightarrow \{1,\ldots,n\}$ is an injection for which $f_*(I_\lambda) \subseteq I_{\lambda'}$ then $I_{\lambda'} = \langle x_1, x_2, \ldots, x_n\rangle$.  By definition of FI-modules this suffices to prove our claim.

We first show that $I_{\lambda'}$ contains one of the variables $x_1, \ldots, x_n$.  Construct the BFR-fillings of both $\lambda$ and $\lambda'$ and compare the boxes labeled $1$.  A Young diagram with at least two rows has label $1$ in the bottom box of the first column.  A Young diagram with only one row has label $1$ in the rightmost box of its row.   If either $\lambda$ or $\lambda'$ has just one row then the generating set for its Tanisaki ideal contains $e_1(x_1)$.  Hence $I_{\lambda'} \supseteq I_{\lambda}$ contains the variable $x_1$.  Otherwise the top-left boxes of the BFR-fillings for $\lambda$ and $\lambda'$ have the labels $n-1$ and $n$ respectively, so $\Grf(\lambda)$ contains $e_1(x_1, \ldots, x_{n-1})$ and $\Grf(\lambda')$ contains $e_1(x_1,\ldots,x_n)$.  In this case 
\[e_1(x_1,\ldots,x_n) - f_*(e_1(x_1,\ldots, x_{n-1})) = x_j\] 
lies in $I_{\lambda'}$ for the unique $j \in \{1,2,\ldots,n\} - \textup{Im}(f)$.  

Thus $I_{\lambda'}$ contains at least one of the variables $x_1, \ldots, x_n$.  The Tanisaki ideal is invariant under the action of $S_n$ that permutes the variables $x_1, \ldots, x_n$ so in fact $I_{\lambda'} = \left\langle x_1, \ldots, x_n \right\rangle$.  

We conclude that
$$ R_n/I_{\lambda'} = R_n/\left\langle x_1, \ldots, x_n \right\rangle \cong \C$$
is trivial.  Since $\lambda'$ gives the trivial representation, it consists of a single row, as desired.
\end{proof}

%%%%%%%%%%%%%%%%%%%%%%%%%%%%%%%%%%%%%%%%%%%%
%%%%%%%%%%%%%%%%%%%%%%%%%%%%%%%%%%%%%%%%%%%%
%%               Section 5                %%
%%%%%%%%%%%%%%%%%%%%%%%%%%%%%%%%%%%%%%%%%%%%
%%%%%%%%%%%%%%%%%%%%%%%%%%%%%%%%%%%%%%%%%%%%

\section{The Springer representation with the co-FI-module structure}\label{sec:co-FI-module_structure}

In this section we prove that Springer representations admit a co-FI-module structure.  Recall from Section~\ref{sec:FI_and_co-FI} that the sequence of polynomial rings forms a co-FI-module.  This section shows that the natural restriction maps from $k[x_1, \ldots, x_n]$ to $k[x_1, \ldots, x_{n-1}]$ are defined on Tanisaki ideals, too.  Our proof mimics a similar proof for the cohomology of flag varieties in the arXiv version of a paper by Church, Ellenberg, and Farb~\cite[Theorem~3.4]{CEFa}. A surprising feature of our result is that this co-FI-module structure exists for Springer representations corresponding to {\em every possible} sequence $\lambda_1 \subseteq \lambda_2 \subseteq \lambda_3 \subseteq \cdots$ of Young diagrams.

The main features of the proof were outlined in previous sections, especially Section~\ref{sec:FI_and_co-FI}, which collected steps that reduce the proof that a sequence of Springer representations form a co-FI-module to proving that particular inclusions $\iota_n^*: R_n \rightarrow R_{n-1}$ preserve Tanisaki ideals.  We prove the main theorem first and then prove the key lemma.

\begin{theorem}\label{thm:the_main_theorem}
Suppose $\lambda_1 \subseteq \lambda_2 \subseteq \cdots$ is a sequence of Young diagrams for which $\lambda_n$ has $n$ boxes for each $n$.  Then the sequence $\{R_n/I_{\lambda_n}\}$ forms a graded co-FI-module of finite type. 
\end{theorem}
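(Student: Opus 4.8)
The plan is to reduce Theorem~\ref{thm:the_main_theorem} to a single containment of ideals and then feed that into the machinery of Section~\ref{sec:FI_and_co-FI}. By Corollary~\ref{corollary: simple inclusions}, to know that the sequence $I = \{I_{\lambda_n}\}$ is a co-FI-submodule of the graded co-FI-algebra $R = \{R_n\}$ it is enough to verify that for each $n \geq 2$ the restriction map $\iota_n^* \colon R_n \to R_{n-1}$ attached to the standard inclusion $\iota_n \colon \{1,\ldots,n-1\} \hookrightarrow \{1,\ldots,n\}$ carries $I_{\lambda_n}$ into $I_{\lambda_{n-1}}$; recall that $\iota_n^*$ sends $x_n \mapsto 0$ and fixes $x_1, \ldots, x_{n-1}$. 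Since each $I_{\lambda_n}$ is a homogeneous ideal and $\iota_n^*$ is a graded ring homomorphism, once this containment holds the sequence $I$ is automatically a graded co-FI-ideal, and hence $R/I = \{R_n/I_{\lambda_n}\}$ is a graded co-FI-algebra, in particular a graded co-FI-module.

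Next I would establish the containment $\iota_n^*(I_{\lambda_n}) \subseteq I_{\lambda_{n-1}}$ by working with the BFR-generators. By the Biagioli-Faridi-Rosas theorem, $I_{\lambda_n}$ is generated by $\Grf(\lambda_n)$, a union of sets $E_{i,j}^n$; because $\iota_n^*$ is a ring homomorphism it suffices to check that $\iota_n^*$ sends each such generating set into $I_{\lambda_{n-1}}$. A short computation — using that $\iota_n^*$ fixes $e_i(S)$ when $x_n \notin S$, and the identity $e_i(S) = e_i(S \setminus \{x_n\}) + x_n \, e_{i-1}(S \setminus \{x_n\})$ when $x_n \in S$ — shows that $\iota_n^*(E_{i,j}^n) \subseteq E_{i,j}^{n-1} \cup E_{i,j-1}^{n-1}$, where the second set is read as $\{0\}$ whenever $j-1 < i$. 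So the whole theorem reduces to the key lemma: whenever $E_{i,j}^n$ occurs in $\Grf(\lambda_n)$, every element of $E_{i,j}^{n-1}$ and of $E_{i,j-1}^{n-1}$ lies in $I_{\lambda_{n-1}}$. I would prove this by comparing the BFR-fillings of $\lambda_{n-1}$ and $\lambda_n$, using that $\lambda_n$ is $\lambda_{n-1}$ with a single box adjoined: according to whether the new box starts a new row, extends a row in the interior, lands in the first column, or lands in the top row, one tracks how the box labels and column-tops change, and for each generating pair $(i,j)$ of $\lambda_n$ one exhibits a generating pair of $\lambda_{n-1}$ — or combines several of them via the elementary identities relating $e_i$ of sets of different sizes — whose associated $E$-set witnesses the required membership in $I_{\lambda_{n-1}}$.

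Finally I would invoke Proposition~\ref{prop: quotient is graded co-FI}. The ring $\mathbb{C}$ is Noetherian, $R$ is a graded co-FI-module over it, and its dual $R^\vee$ has finite type by Example~\ref{example: finite generation} (each graded piece is finitely generated as an FI-module, generated by the monomials $x_1^{d_1} \cdots x_n^{d_n}$ indexed by partitions of the degree). Having shown that $I$ is a co-FI-ideal, the proposition yields that $(R/I)^\vee$ is a graded FI-module of finite type; by Definition~\ref{def:finite_generation_and_type} this is precisely the assertion that $\{R_n/I_{\lambda_n}\}$ is a graded co-FI-module of finite type.

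I expect the main obstacle to be the combinatorial key lemma of the second paragraph. The subtlety is that adjoining one box to $\lambda_{n-1}$ can simultaneously change the BFR-label of every box in the columns to its left and in the top row to its right, so the correspondence between the generating pairs of $\lambda_n$ and those of $\lambda_{n-1}$ is not a naive index shift; one must argue case by case and occasionally trade an $E_{i,j}$ relation for a combination of $E_{i,j-1}$ and $E_{i-1,j-1}$ relations inside the ideal. Everything else — the reduction to inclusions, preservation of the grading, and the passage to finite type — is formal given the results already assembled.
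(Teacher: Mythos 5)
Your high-level architecture matches the paper exactly: reduce to the single-step inclusion via Corollary~\ref{corollary: simple inclusions}, compute $\iota_n^*$ on the BFR-generating sets $E_{i,j}^n$ (your containment $\iota_n^*(E_{i,j}^n)\subseteq E_{i,j}^{n-1}\cup E_{i,j-1}^{n-1}$ is the paper's Lemma~\ref{lemma: image of BFR-generators for a box}), and then feed the resulting co-FI-ideal into Proposition~\ref{prop: quotient is graded co-FI} to get finite type. That part is all correct.

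The gap is in the combinatorial ingredient you plan to use for the key lemma. You propose to establish membership in $I_{\lambda_{n-1}}$ by ``trading an $E_{i,j}$ relation for a combination of $E_{i,j-1}$ and $E_{i-1,j-1}$ relations,'' i.e.\ by iterating the recursion $e_i(S) = e_i(S') + x_k\, e_{i-1}(S')$. This goes the wrong way. Consider a box in region A: it contributes $E_{i,j}^{n-1}$ to $\Grf(\lambda_{n-1})$ and $E_{i,j+1}^{n}$ to $\Grf(\lambda_{n})$, and $\iota_n^*(E_{i,j+1}^{n}) = E_{i,j+1}^{n-1} \cup E_{i,j}^{n-1}$. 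The second piece is a BFR-generator of $\lambda_{n-1}$, but for the first piece you must pass from $E_{i,j}^{n-1}$ \emph{up} to $E_{i,j+1}^{n-1}$. The recursion you cite only decomposes $e_i$ of a bigger set into $e_i$ and $e_{i-1}$ of a smaller set, and the extra term $E_{i-1,j}^{n-1}$ it introduces is in general \emph{not} in the Tanisaki ideal (for example, for $\lambda=(2,1)$, $E_{1,2}^{3}\not\subseteq I_\lambda$). What is actually needed is the averaging identity behind the paper's Lemma~\ref{lemma: E_ij contains E_ij+1} and Corollary~\ref{corollary: does the trick}: for a set $S'$ of size $j+1$, summing $e_i(S)$ over all $j$-element subsets $S\subseteq S'$ gives $(j+1-i)\,e_i(S')$, so the $\mathbb{Q}$-linear span of $E_{i,j}^{n}$ already contains $E_{i,j+1}^{n}$ — no lower-degree $e_{i-1}$ terms appear. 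This one lemma is what powers both the region-A and region-B cases (and the analysis of the added box), and it is precisely the piece missing from your sketch.
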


\begin{proof}
Definition~\ref{def:finite_generation_and_type} states that to show $\{R_n/I_{\lambda_n}\}$ is a graded co-FI-module of finite type, we must show 1) it is a graded co-FI-module and 2) its dual is a graded FI-module of finite type.

Assuming a field $k$ of characteristic zero, Proposition~\ref{prop: quotient is graded co-FI} states that if $R$ is a graded co-FI-module and $I$ is a co-FI-ideal then $R/I$ is a graded co-FI-module.  The sequence of polynomial rings is a graded co-FI-algebra under the co-FI-algebra structure that sends the map $f: \{1,\ldots,m\} \rightarrow \{1,\ldots,n\}$ to the map with $f^*(x_i)$ equal to $x_{f^{-1}(i)}$ if $i \leq m$ and zero otherwise (see Example~\ref{example: finite generation}).  Corollary~\ref{corollary: simple inclusions} proves that if $\{\lambda_1 \subseteq \lambda_2 \subseteq \cdots \}$ is a sequence of Young diagrams for which $\iota_{n}^*(I_{\lambda_n}) \subseteq I_{\lambda_{n-1}}$ for every $n \geq 2$ then the sequence of Tanisaki ideals $\{I_{\lambda_1}, I_{\lambda_2}, \ldots\}$ forms a co-FI-ideal.  Lemma~\ref{lemma: standard inclusion acts right on Tanisaki ideals} proves that $\iota_{n}^*(I_{\lambda_n}) \subseteq I_{\lambda_{n-1}}$ for every $n \geq 2$ and for every sequence of Young diagrams  $\{\lambda_1 \subseteq \lambda_2 \subseteq \cdots \}$.   Thus $\{R_n/I_{\lambda_n}\}$ is a graded co-FI module.  

It is known that the dual ${ (R_n)^\vee }$ is a graded FI-module of finite type.  The proof uses the facts that 0) the variables $x_1, \ldots, x_n \in R_n$ form a basis for the dual to $k^n$, 1) the $j^{\mathrm{th}}$ graded part $(R_n)^\vee_j$ is isomorphic to the $j^{\mathrm{th}}$ symmetric power of $k^n$, and 2) the $k$-modules $k, k^2, k^3, \ldots, k^n, \ldots$ are the parts of a natural FI-module $M(1)$ whose algebraic properties are well understood; for details, see~\cite[proof of Theorem F]{CEFN}.

Proposition~\ref{prop: quotient is graded co-FI} then implies that $\{\left(R_n/I_{\lambda_n}\right)^\vee\}$ is a graded FI-module of finite type, proving the claim.
\end{proof}

\subsection{Sequences of Tanisaki ideals form a co-FI-ideal}
We now analyze the BFR-generators  of the Tanisaki ideals to show that $\iota_n^*(I_{\lambda'}) \subseteq I_{\lambda}$ when $\lambda \subseteq \lambda'$.  In an interesting twist, this co-FI-module structure exists for any path in the poset of Young diagrams.  Our proof that these sequences of Tanisaki ideals $\{I_{\lambda_i}\}$ form a co-FI-ideal occurs over the course of the following lemmas.  Each result actually analyzes a subset of homogeneous polynomials of the same degree inside $I_{\lambda}$ so our proofs preserve grading (though we don't use this fact).

We note the following well-known relation (e.g., \cite[pg.~21]{Pro07}, \cite[Equation (3.1)]{FGP97}). 

\begin{proposition}
Let $S \subseteq \{x_1,\ldots,x_{n}\}$.  If $x_{n} \in S$ then $e_i(S)$ decomposes as
$$e_i(S) = x_{n} \cdot e_{i-1}(S-\{x_{n}\}) + e_i(S-\{x_{n}\}).$$
\end{proposition}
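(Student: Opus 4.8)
The plan is to prove the identity by a direct computation straight from the definition of $e_i(S)$, organized around a single case split. Recall that by definition $e_i(S) = \sum_{T} \prod_{x_j \in T} x_j$, where $T$ ranges over all subsets of $S$ with $|T| = i$. Since we are assuming $x_n \in S$, I would partition this index set of subsets into two disjoint families according to whether or not $x_n \in T$, and sum each family separately.

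For the family of $i$-element subsets $T$ with $x_n \in T$, I would use the obvious bijection $T \longleftrightarrow T' := T - \{x_n\}$ between such $T$ and the $(i-1)$-element subsets $T'$ of $S - \{x_n\}$. Under this bijection $\prod_{x_j \in T} x_j = x_n \cdot \prod_{x_j \in T'} x_j$, so the sum over this family is exactly $x_n \cdot e_{i-1}(S - \{x_n\})$. For the complementary family of $i$-element subsets $T$ with $x_n \notin T$, these are precisely the $i$-element subsets of $S - \{x_n\}$, so the sum over this family is exactly $e_i(S - \{x_n\})$ by definition. Adding the two contributions yields the claimed decomposition.

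Honestly, there is no real obstacle here: the statement is a routine rearrangement, and the only thing requiring any care is the bookkeeping of the bijection $T \longleftrightarrow T'$ and a glance at the degenerate cases. In particular, if $i = 0$ the identity should be read with the standard conventions $e_0 = 1$ and $e_{-1} = 0$, and if $|S| < i$ every term on both sides vanishes; I would note these conventions in passing but would not belabor them. The same argument also makes transparent the mild grading observation the authors flag just before the proposition, since both summands on the right have the same total degree as $e_i(S)$.
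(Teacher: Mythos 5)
Your proof is correct, and it is the standard argument: split the $i$-element subsets $T\subseteq S$ into those containing $x_n$ and those not, match the former bijectively with $(i-1)$-element subsets of $S-\{x_n\}$, and read off the two terms. The bookkeeping and the degenerate conventions ($e_0=1$, $e_{-1}=0$, and $e_i=0$ when $i>|S|$) are handled appropriately. For what it is worth, the paper itself offers no proof of this proposition; it labels the identity as well known and simply cites Procesi and Fomin--Gelfand--Postnikov, so there is no argument in the text to compare against. Your writeup is essentially the proof those references would give. One small observation: the remark in the paper immediately before the proposition, about each argument analyzing a fixed-degree subset of $I_\lambda$, is a comment on the subsequent lemmas rather than on this identity, so the final sentence of your proposal about ``the mild grading observation the authors flag'' is attributing to the proposition a role it doesn't quite play; the homogeneity of both sides in degree $i$ is true but is not what that remark is pointing at.
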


This immediately implies the following.  (Recall that $i$ is the degree of each function in the set $E_{i,j}^n$ while $j$ is the cardinality $|S|$ of the variable subset $S \subseteq \{x_1,x_2,\ldots,x_n\}$.)

\begin{lemma}\label{lemma: image of BFR-generators for a box}
\[\iota_n^*\left(E_{i,j+1}^{n}\right) = E_{i,j+1}^{n-1} \cup E_{i,j}^{n-1}\]
where $E_{i,k}^{n-1}$ is empty if $k > n-1$ or if $i > k$ or if $n-1 < 1$.
\end{lemma}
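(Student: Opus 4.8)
The plan is to apply $\iota_n^*$ directly to each generator in $E_{i,j+1}^n$ and read off the result, splitting into two cases according to whether or not the distinguished variable $x_n$ belongs to the subset $S$. Recall that $\iota_n^*$ is the $k$-algebra homomorphism fixing $x_1,\ldots,x_{n-1}$ and sending $x_n \mapsto 0$, so $\iota_n^*(e_i(S))$ is obtained simply by evaluating $e_i(S)$ at $x_n = 0$.

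First I would handle the subsets $S$ with $x_n \notin S$: here $S$ is a size-$(j+1)$ subset of $\{x_1,\ldots,x_{n-1}\}$, the map $\iota_n^*$ fixes every variable appearing in $e_i(S)$, and so $\iota_n^*(e_i(S)) = e_i(S) \in E_{i,j+1}^{n-1}$; conversely every element of $E_{i,j+1}^{n-1}$ is visibly of this form. Next I would handle the subsets $S$ with $x_n \in S$: applying the elementary symmetric decomposition recalled just above to write $e_i(S) = x_n\, e_{i-1}(S - \{x_n\}) + e_i(S - \{x_n\})$ and using $\iota_n^*(x_n) = 0$, we get $\iota_n^*(e_i(S)) = e_i(S - \{x_n\})$, which lies in $E_{i,j}^{n-1}$ since $|S - \{x_n\}| = j$; conversely, for any size-$j$ subset $T \subseteq \{x_1,\ldots,x_{n-1}\}$ the set $S = T \cup \{x_n\}$ has size $j+1$ and satisfies $\iota_n^*(e_i(S)) = e_i(T)$, so all of $E_{i,j}^{n-1}$ is attained. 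Taking the union over all $S$ of cardinality $j+1$ then yields the asserted equality.

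The only point requiring care — and the closest thing to an obstacle, though a minor one — is verifying that the stated degenerate conventions make the displayed equation literally correct rather than merely true ``up to zero generators'': when $j+1 > n$ there is no subset of the required size, so both sides are empty; when $i > j+1$ (respectively $i > j$) every relevant $e_i(S)$ is the zero polynomial, matching the convention that the corresponding $E$-set is empty; when $n - 1 < 1$ there are no variables left at all; and the boundary value $j+1 = n$ falls under the second case above, since $x_n$ necessarily lies in the unique subset $\{x_1,\ldots,x_n\}$. I expect this bookkeeping of conventions, rather than any substantive mathematics, to be the fiddliest part of writing the argument up.
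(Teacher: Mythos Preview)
Your proposal is correct and follows exactly the approach the paper intends: the paper states this lemma as an immediate consequence of the decomposition $e_i(S) = x_n\, e_{i-1}(S-\{x_n\}) + e_i(S-\{x_n\})$ without writing out any further details, and your case split on whether $x_n \in S$ together with the two-way inclusion check is precisely the unpacking of that ``immediate'' step.
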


The next lemma is our main tool: if an ideal contains $E_{i,j}^n$ then it contains $E_{i,j+1}^n$ as well.

\begin{lemma}\label{lemma: E_ij contains E_ij+1}
The $\mathbb{Q}$-linear span of $E_{i,j}^n$ contains $E_{i,j+1}^n$.
\end{lemma}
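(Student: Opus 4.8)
The plan is to express each $e_i(S)$ with $|S| = j+1$ as a $\mathbb{Q}$-linear combination of the polynomials $e_i(T)$ with $|T| = j$ obtained by deleting a single variable from $S$. Concretely, fix $S$ with $|S| = j+1$, and for each $x \in S$ write $S_x = S \setminus \{x\}$, so each $e_i(S_x)$ lies in $E_{i,j}^n$. The natural identity to exploit is the one recorded just above the lemma: for $x \in S$ we have $e_i(S) = x\cdot e_{i-1}(S_x) + e_i(S_x)$, but a cleaner route is to sum $e_i(S_x)$ over all $x \in S$ directly. Each monomial appearing in $e_i(S)$ is a product of $i$ of the $j+1$ variables in $S$; when we form $\sum_{x \in S} e_i(S_x)$, a given degree-$i$ monomial survives in exactly those $e_i(S_x)$ for which $x$ is not one of its $i$ factors, i.e. for $(j+1) - i$ choices of $x$. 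Hence
\[
\sum_{x \in S} e_i(S_x) = (j+1-i)\, e_i(S).
\]
Provided $j + 1 \neq i$, this gives $e_i(S) = \frac{1}{\,j+1-i\,}\sum_{x \in S} e_i(S_x)$, exhibiting $e_i(S)$ in the $\mathbb{Q}$-linear span of $E_{i,j}^n$, which is exactly the claim.

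The one case this misses is $j + 1 = i$, i.e. $|S| = i$. There $E_{i,j+1}^n$ consists of the single monomials $e_i(S) = \prod_{x \in S} x$, and I would handle it separately. First I would note that for $n$ and $i$ in range we may assume $j = i - 1 \geq 1$ unless $i = 1$; the $i = 1$ subcase is trivial since $e_1$ of a singleton is a variable and $e_1$ of a $2$-element set is a sum of two variables, and the span of all size-$1$ $E_{1,1}^n$ already contains every variable. For $i \geq 2$ with $|S| = i$, I would instead use the recursive identity in the direction $e_i(S) = x\cdot e_{i-1}(S_x)$ (the $e_i(S_x)$ term vanishes since $|S_x| = i - 1 < i$) — but this requires a variable prefactor, so it does not immediately land inside the $\mathbb{Q}$-span of $E_{i,i}^n$. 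A better fix is to observe that one does not need the full statement in the edge case for the later application: in the Tanisaki setting the relevant $E_{i,j}^n$ always have $j$ strictly larger than $i$ once we are past the first column, so I would simply state the lemma with the harmless genericity hypothesis $j \geq i$ made explicit, or note that when $j + 1 = i$ the set $E_{i,j+1}^n$ is contained in the span of $E_{i,j}^n = E_{i,i-1}^n$ only after also adjoining lower-degree pieces, which is automatic inside an ideal. Since the lemma will only ever be applied inside an ideal (Lemma~\ref{lemma: image of BFR-generators for a box} feeds it generators of $I_\lambda$), stating it at the level of $\mathbb{Q}$-linear span with $j+1 > i$ costs nothing.

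The genuinely mechanical part is the counting identity $\sum_{x\in S} e_i(S_x) = (j+1-i)e_i(S)$, which is a one-line monomial-bookkeeping argument. The main obstacle is purely cosmetic: making sure the degenerate cases flagged in Lemma~\ref{lemma: image of BFR-generators for a box} (namely $j+1 > n$, or $i > j+1$, or $n - 1 < 1$, where the sets are empty and the containment is vacuous) and the single boundary case $j+1 = i$ are dispatched cleanly so that the induction in the following lemmas has no gaps. I would therefore open the proof by disposing of all the vacuous cases, then prove the nonvacuous generic case by the displayed identity, and finally remark that the boundary case $j+1=i$ is either vacuous or reduces to the $i=1$ triviality, so it does not obstruct the subsequent use of the lemma.
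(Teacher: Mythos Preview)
Your core argument is exactly the paper's: sum $e_i(S\setminus\{x\})$ over $x\in S$ with $|S|=j+1$, observe each degree-$i$ monomial is counted $j+1-i$ times, and divide. The paper likewise divides by $j+1-i$ without comment, so the edge case $j+1=i$ you worry about is not addressed there either; your instinct to flag it is sound, but your attempted patches (the $i=1$ discussion in particular confuses the direction of containment) are unnecessary---it suffices to note that the lemma is only ever invoked with $j\geq i$, where $j+1-i\geq 1$ and the division is valid.
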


\begin{proof}
Let $S' \subseteq \{x_1, \ldots, x_n\}$ be an arbitrary subset of cardinality $j+1$.  We construct $e_i(S')$ explicitly in the $\mathbb{Q}$-linear span of $E_{i,j}^n$.  Consider the polynomial
\[p =  \! \!  \! \!  \! \!  \! \!  \sum_{\mbox{\scriptsize $\begin{array}{c}S \textup{ such that } \\ S \subseteq S' \textup{ and } |S|=j \end{array}$}} \! \!  \! \!  \! \!  \! \! e_i(S)\]
which is in $\langle E_{i,j}^n \rangle_{\mathbb{Q}}$ by definition.  Let $T$ be a subset of $S'$ of cardinality $i$ and consider the coefficient of the monomial $\prod_{x_k \in T} x_k$ in $p$. Whenever $S \subseteq S'$ contains $T$ the monomial $\prod_{x_k \in T} x_k$ appears with coefficient $1$ in $e_i(S)$.  There are $j+1$ subsets of $S'$ with cardinality $j$ and all but $i$ of them contain $T$.  Hence the coefficient of $\prod_{x_k \in T} x_k$ in $p$ is exactly $j+1-i$.  Thus the polynomial $\frac{1}{j+1-i}p = e_i(S')$ as desired.
\end{proof}

\begin{example}
Consider the sets $E_{2,3}^5$ and $E_{2,4}^5$ and let $S' = \{x_1,x_2,x_4,x_5\}$.  The polynomial $p$ from Lemma \ref{lemma: E_ij contains E_ij+1} is
\begin{align*}
p = (x_1x_2+x_1x_4+x_2x_4) &+ (x_1x_2+x_1x_5+x_2x_5)\\
    &\hspace{-.75in} +(x_1x_4+x_1x_5+x_4x_5)+(x_2x_4+x_2x_5+x_4x_5)
\end{align*}
which simplifies as desired to
\[p = 2(x_1x_2+x_1x_4+x_1x_5+x_2x_4+x_2x_5+x_4x_5)=(4-2)e_2(S').\]
\end{example}

Applying Lemma~\ref{lemma: E_ij contains E_ij+1} repeatedly gives the following.

\begin{corollary}\label{corollary: does the trick}
If an ideal $I \subseteq \mathbb{Q}[x_1, \ldots, x_n]$ contains the subset $E_{i,j}^n$ then it contains $E_{i,j+k}^n$ for all $k \in \{1, 2, \ldots, n-j\}$.
\end{corollary}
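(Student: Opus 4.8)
The plan is a straightforward induction on $k$, with all the substantive content already packaged into Lemma~\ref{lemma: E_ij contains E_ij+1}; the corollary is pure bookkeeping on top of it. The base case $k=1$ is exactly that lemma: since $I$ is in particular a $\mathbb{Q}$-vector subspace of $\mathbb{Q}[x_1,\ldots,x_n]$, the hypothesis $E_{i,j}^n \subseteq I$ forces the $\mathbb{Q}$-linear span $\langle E_{i,j}^n\rangle_{\mathbb{Q}}$ to lie in $I$, and by Lemma~\ref{lemma: E_ij contains E_ij+1} this span contains $E_{i,j+1}^n$.

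For the inductive step I would suppose $1 \le k < n-j$ and that $E_{i,j+k}^n \subseteq I$ has already been established. Applying Lemma~\ref{lemma: E_ij contains E_ij+1} with $j$ replaced by $j+k$ gives $E_{i,j+k+1}^n \subseteq \langle E_{i,j+k}^n\rangle_{\mathbb{Q}} \subseteq I$, which advances the induction and, run out to $k = n-j$, yields the claim. The one point worth verifying explicitly is that the lemma genuinely applies at every stage: its proof divides by the integer $(j+k)+1-i$, so this must be nonzero, and the index set of subsets of cardinality $j+k+1$ must be nonempty, i.e.\ $j+k+1 \le n$. The latter is exactly the constraint $k < n-j$. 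For the former, note that $E_{i,j}^n$ being nonempty (an ideal ``containing'' the empty set being a vacuous hypothesis) forces $i \le j$, whence $(j+k)+1-i \ge k+1 \ge 2 > 0$ for all $k$ in range; so no stage of the induction degenerates.

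I do not expect any real obstacle here. The only things that require a sentence of care are the range check on $k$ (so that the ambient variable set $\{x_1,\ldots,x_n\}$ stays large enough to contain subsets of the successive cardinalities $j+1, j+2, \ldots$) and the nonvanishing of the normalizing constants just noted; both are immediate from the stated range $k \in \{1,\ldots,n-j\}$ and the nonemptiness of $E_{i,j}^n$.
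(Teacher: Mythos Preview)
Your proposal is correct and follows essentially the same approach as the paper, which simply states that the corollary follows by applying Lemma~\ref{lemma: E_ij contains E_ij+1} repeatedly. Your added sanity checks on the range of $k$ and the nonvanishing of the normalizing constant are useful details that the paper leaves implicit.
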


We now use these combinatorial properties of symmetric functions together with the BFR-generators of the Tanisaki ideal to prove the main lemma of this section

\begin{lemma} \label{lemma: standard inclusion acts right on Tanisaki ideals}
Suppose that $\lambda$ is a Young diagram with $n-1$ boxes.  If $\lambda'$ is obtained from $\lambda$ by adding one box then $\iota_n^*(I_{\lambda'}) \subseteq I_{\lambda}$.
\end{lemma}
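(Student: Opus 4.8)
The plan is to reduce the containment $\iota_n^*(I_{\lambda'}) \subseteq I_\lambda$ to a finite check on the BFR-generators. Since $I_{\lambda'}$ is generated by $\Grf(\lambda')$, it suffices to show $\iota_n^*(\Grf(\lambda')) \subseteq I_\lambda$; and since $I_\lambda$ is generated by $\Grf(\lambda)$, it is enough to show that for each column of the BFR-filling of $\lambda'$, contributing some set $E_{i,j+1}^n$ to $\Grf(\lambda')$, the image $\iota_n^*(E_{i,j+1}^n) = E_{i,j+1}^{n-1} \cup E_{i,j}^{n-1}$ (by Lemma~\ref{lemma: image of BFR-generators for a box}) lies in $I_\lambda$. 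The key combinatorial input is Corollary~\ref{corollary: does the trick}: if $I_\lambda$ contains $E_{i,j_0}^{n-1}$ for some $j_0$, then it contains $E_{i,j}^{n-1}$ for all larger $j$. So I only need to locate, for each relevant degree $i$, the \emph{smallest} cardinality $j_0 = j_0(\lambda,i)$ such that $E_{i,j_0}^{n-1} \subseteq \Grf(\lambda)$, read off from the BFR-filling of $\lambda$, and verify that both $j+1$ and $j$ (the two cardinalities appearing in the image) are at least $j_0(\lambda,i)$ — or else that the corresponding set $E_{i,\cdot}^{n-1}$ is empty (which happens, per the caveat in Lemma~\ref{lemma: image of BFR-generators for a box}, when $i$ exceeds the cardinality, i.e.\ $E_{i,j}^{n-1} = \emptyset$ whenever $i > j$, and these empty sets are trivially in $I_\lambda$).

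The heart of the argument is therefore a careful comparison of the BFR-fillings of $\lambda$ and $\lambda'$. Write $\lambda'$ as $\lambda$ with a single box added in some row $r$ (so that $\lambda$ and $\lambda'$ agree as Young diagrams except that column $c := \lambda'_r$ is one taller in $\lambda'$). I would analyze how this added box shifts the labels in the BFR-filling: recall the labels $1, \dots, n-\lambda_1$ run up the columns left to right skipping the top row, and then $n-\lambda_1+1, \dots, n$ fill the top row right to left. Adding a box either lengthens the first row (the "trivial-looking" case where $\lambda'$ has a longer first row) or adds a box strictly below the first row in some column $c$. In the latter case, the column-by-column comparison shows that for a fixed column the pair $(i,j)$ of (label, top-row-height-of-column) for $\lambda'$ relates to the corresponding pair for $\lambda$ in a controlled way: either the column is unchanged, or the label $i$ increases by one while $j$ stays fixed or also increases by one, or a genuinely new generator-contributing box appears. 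In every case the target cardinalities $j+1$ and $j$ dominate the threshold $j_0(\lambda,i)$, by Corollary~\ref{corollary: does the trick}. The cleanest way to organize this is probably to prove a small auxiliary claim: for every box of the BFR-filling of $\lambda'$, labeled $i$ with column-top-height $j+1$, one has $E_{i,j+1}^{n-1} \subseteq I_\lambda$ and $E_{i,j}^{n-1} \subseteq I_\lambda$.

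I expect the main obstacle to be the bookkeeping around how a single added box relabels the BFR-filling — in particular, the off-by-one shifts between the labelings of $\lambda$ and $\lambda'$, and handling the boundary cases (adding a box to the first row; a column of $\lambda'$ whose added box is itself in the top row; and the degenerate ranges where $E_{i,j}^{n-1}$ is empty). None of these are deep, but getting a uniform statement that covers all positions of the added box without an unwieldy case split is the delicate part. A useful simplification is that, by Lemma~\ref{lemma: inclusion suffices} and the $S_n$-invariance of Tanisaki ideals, we only ever need the \emph{standard} inclusion $\iota_n$, so the image computation is exactly $E_{i,j+1}^{n-1}\cup E_{i,j}^{n-1}$ with no permutation twisting to track. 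Once the auxiliary claim is established, the lemma follows immediately: $\iota_n^*(\Grf(\lambda'))$ is a union of such sets $E_{i,j+1}^{n-1}\cup E_{i,j}^{n-1}$, all contained in $I_\lambda$, so $\iota_n^*(I_{\lambda'}) \subseteq I_\lambda$.
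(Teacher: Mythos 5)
Your overall strategy is exactly the one the paper uses: reduce to the BFR-generators, compute $\iota_n^*(E_{i,j+1}^n)=E_{i,j+1}^{n-1}\cup E_{i,j}^{n-1}$ via Lemma~\ref{lemma: image of BFR-generators for a box}, and then use the monotonicity of Corollary~\ref{corollary: does the trick} to absorb the larger-cardinality sets into $I_\lambda$. The framework is right and the reduction via Lemma~\ref{lemma: inclusion suffices} to the standard inclusion $\iota_n$ is exactly the right simplification.

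The gap is that your ``small auxiliary claim'' is where the entire content of the lemma lives, and you defer it to ``bookkeeping'' without carrying it out. Concretely: once you pass to $\lambda'$-boxes whose BFR label is \emph{shifted} relative to $\lambda$ (the boxes that come after the added box in BFR order, and every top-row box), a box of $\lambda'$ contributing $E_{i+1,j+1}^{n}$ corresponds to a box of $\lambda$ contributing $E_{i,j}^{n-1}$ --- with a \emph{different} degree index. To apply Corollary~\ref{corollary: does the trick} you must instead locate the box labeled $i+1$ in the BFR-filling of $\lambda$ and argue its column-top height is $\le j$; this is the one non-mechanical observation (the paper's argument is that this box sits either directly above, or strictly to the right of, the position holding $i+1$ in $\lambda'$, so its column-top cannot be larger than $j$). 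Without this step your assertion ``in every case the target cardinalities $j+1$ and $j$ dominate the threshold $j_0(\lambda,i)$'' is unsupported. Also, a small point: the case ``$j$ stays fixed'' never occurs --- adding any box to $\lambda$ shifts every top-row label of $\lambda'$ up by one, so every surviving box has its $j$-parameter increase by exactly one; your case split should be between boxes whose degree label $i$ is unchanged and boxes whose label increases by one (plus the newly added box itself, where the empty-set caveat $E_{i,i-1}^{n-1}=\emptyset$ is used when the new box lands in the first row).
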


\begin{proof}
Below we give schematics for the relative configurations of $\lambda$, $\lambda'$, and the deleted box (shown in light grey). 
$$
\begin{tikzpicture}[scale=.35]
\draw (0,0)--(0,8)--(10,8)--(10,7)--(10,5)--(8,5)--(8,4)--(4,4)--(4,3)--(3,3)--(3,1)--(2,1)--(2,0)--(0,0);
\draw [fill=gray] (0,7) rectangle (10,8);
\draw [fill=gray] (4,4)--(4,7)--(10,7)--(10,5)--(8,5)--(8,4)--(4,4);
\node [above] at (2,4) {\textbf{A}};
\node [above] at (6.75,6.05) {\textbf{B}};
\draw [white] (4,3)--(4,4)--(5,4);\draw [fill=lightgray, dashed] (4,3)--(4,4)--(5,4)--(5,3)--(4,3);
\end{tikzpicture}
\hspace{.75in}
\begin{tikzpicture}[scale=.35]
\draw (0,0)--(0,8)--(10,8)--(10,7)--(10,5)--(8,5)--(8,4)--(4,4)--(4,3)--(3,3)--(3,1)--(2,1)--(2,0)--(0,0);
\node [above] at (5,5) {\textbf{A}};
\draw [fill=gray] (0,7) rectangle (10,8);
\node [above] at (5,6.675) {\textbf{B}};
\draw [white] (10,7)--(10,8);\draw [fill=lightgray, dashed] (10,7)--(10,8)--(11,8)--(11,7)--(10,7);
\end{tikzpicture}
$$
We will compare the BFR-generators for $I_{\lambda'}$ and $I_{\lambda}$ for region A, region B, and the two different cases of grey boxes.  

Each box in region A corresponds to BFR-generators $E_{i,j}^{n-1}$ in $I_{\lambda}$ and $E_{i,j+1}^{n}$ in $I_{\lambda'}$.  In each of these cases the image 
\[\iota_n^*(E_{i,j+1}^{n}) = E_{i,j+1}^{n-1} \cup E_{i,j}^{n-1}\] 
by Lemma \ref{lemma: image of BFR-generators for a box}.  By construction $I_{\lambda}$ contains $E_{i,j}^{n-1}$ so by Corollary \ref{corollary: does the trick} we know $I_{\lambda}$ contains $E_{i,j+1}^{n-1}$ as well.  It follows that $I_{\lambda}$ contains $\iota_n^*(E_{i,j+1}^{n})$ for every box in region A.

If the box in $\lambda' - \lambda$  is not on the first row, then it is labeled $E_{i,j+1}^{n}$ in $I_{\lambda'}$ and the box above it is labeled $E_{i,j}^{n-1}$ in $I_{\lambda}$. This is the case of the previous paragraph. 

If the box in $\lambda' - \lambda$ is on the first row, then $I_{\lambda'}$ contains $E_{i,i}^n$ while $I_{\lambda}$ contains $E_{i,i}^{n-1}$.  Lemma~\ref{lemma: image of BFR-generators for a box} shows that $\iota_n^*(E_{i,i}^{n}) = E_{i,i}^{n-1} \cup E_{i,i-1}^{n-1}$.  However $E_{i,i-1}^{n-1}$ is empty by definition.  So $I_{\lambda}$ contains $\iota^*(E_{i,i}^{n})$ in this case too.

Finally, each box in region B corresponds to BFR-generators $E_{i,j}^{n-1}$ in $I_{\lambda}$ and $E_{i+1,j+1}^{n}$ in $I_{\lambda'}$. As above we know
\[\iota_n^*(E_{i+1,j+1}^{n}) = E_{i+1,j+1}^{n-1} \cup E_{i+1,j}^{n-1}\] 
by Lemma \ref{lemma: image of BFR-generators for a box}.  The box labeled $i+1$ in $\lambda$ is either above but in the same column as in $\lambda'$ or in a column to the right of $i+1$ in $\lambda'$. In either case $E_{i+1,j-k}^{n-1}$ is a BFR-generator for $\lambda$ for some nonnegative integer $k$.  Corollary \ref{corollary: does the trick} implies that both $E_{i+1,j+1}^{n-1}$ and  $E_{i+1,j}^{n-1}$ are in $I_{\lambda}$.  Hence $\iota_n^*(E_{i+1,j+1}^{n})$ is in $I_{\lambda}$ for every box in region B, proving the claim.
\end{proof}

%%%%%%%%%%%%%%%%%%%%%%%%%%%%%%%%%%%%%%%%%%%%
%%%%%%%%%%%%%%%%%%%%%%%%%%%%%%%%%%%%%%%%%%%%
%%               Section 6                %%
%%%%%%%%%%%%%%%%%%%%%%%%%%%%%%%%%%%%%%%%%%%%
%%%%%%%%%%%%%%%%%%%%%%%%%%%%%%%%%%%%%%%%%%%%

\section{Representation stability of Springer representations}\label{sec: representation stability}

We want to say that a sequence of representations $\{V_n\}$ is representation stable if, when decomposed into irreducible representations, the sequence of multiplicities of each irreducible representation eventually becomes constant. This doesn't quite make sense because the irreducible representations of the symmetric group $S_n$ depend on $n$, and in fact correspond to the partitions of $n$.  The next definition describes a particular family of irreducible representations whose multiplicities we use to define representation stability.

\begin{definition}[Irreducible $S_n$-representation $V(\mu)_n$~{\cite[Section~2.1]{CF}}]
Let $\mu = (\mu_1, \ldots, \mu_l)$ be a partition of $k$.  For any $n \geq k + \mu_1$ define the \textit{padded partition} to be
$$ \mu[n] := (n-k, \mu_1, \ldots, \mu_l).$$
Define $V(\mu)_n$ to be the irreducible $S_n$-representation
$$V(\mu)_n := V_{\mu[n]}.$$
\end{definition}

Note that every partition of $n$ can be written as $\mu[n]$ for a unique partition $\mu$ and hence every irreducible $S_n$-representation is of the form $V(\mu)_n$ for a unique partition $\mu$.

We can now define representation stability precisely.

\begin{definition}
A sequence of $S_n$-representations $V_1 \stackrel{f_1}{\rightarrow} V_2 \stackrel{f_2}{\rightarrow} V_3 \stackrel{f_3}{\rightarrow} \cdots$ is {\em representation stable} if 
\begin{enumerate}
\item the linear maps $f_n: V_n \rightarrow V_{n+1}$ are $S_n$-equivariant, in the sense that for each $w \in S_n$ we have $f_n \circ w = w(n+1) \circ f_n$ where $w(n+1) \in S_{n+1}$ is the permutation that sends $n+1 \mapsto n+1$ while otherwise acting as $w$;
\item the maps $f_n: V_n \rightarrow V_{n+1}$ are injective;
\item the span of the $S_{n+1}$-orbit of the image $f_n(V_n)$ is all of $V_{n+1}$; and
\item if $V_n$ decomposes into irreducible representations as 
\[V_n = \bigoplus_\mu c_{\mu,n} V(\mu)_n\] 
then there exists $N$ so that the multiplicities $c_{\mu,n}$ are independent of $n$ for all $n \geq N$. 
\end{enumerate}
If $N$ is independent of $\mu$ then $V_1 \stackrel{f_1}{\rightarrow} V_2 \stackrel{f_2}{\rightarrow} V_3 \stackrel{f_3}{\rightarrow} \cdots$ is {\em uniformly} representation stable. 
\end{definition}

FI-module structure corresponds to a sequence of representations being uniformly representation stable.  Moreover the stability of the multiplicities $c_{\mu,n}$ corresponds to a kind of stabilization of the characters of the representations, and thus the dimensions of the representations.  

Since $\{R_n/I_{\lambda_n}\}$ forms a co-FI-module, we will actually show that the dual $\{\left(R_n/I_{\lambda_n}\right)^\vee\}$ is representation stable.  The term {\em Springer representation} is applied interchangeably to these two dual representations (as described in the Introduction--see Remark~\ref{rem:Hotta}).  Moreover, the key implications of representation stability (including dimensions and characters) apply to $\{R_n/I_{\lambda_n}\}$ by virtue of applying to its dual.

More precisely we have the following.

\begin{corollary}\label{cor:springer_rep_is_representation_stable}
Suppose $\lambda_1 \subseteq \lambda_2 \subseteq \lambda_3 \subseteq \cdots$ is a sequence of Young diagrams for which $\lambda_n$ has $n$ boxes for each $n$.  Then the following are all true:
\begin{enumerate}
\item Each graded part of the sequence $\{\left(R_n/I_{\lambda_n}\right)^\vee\}$ is uniformly representation stable.
\item For each $k$ and each $n$ let $\chi_{k,n}$ be the character in the $k^{\mathrm{th}}$ graded part of $R_n/I_{\lambda_n}$.  The sequence $(\chi_{k,1},\chi_{k,2},\chi_{k,3},...)$ is eventually polynomial in the sense of \cite{CEF}.
\item For each $k$ and $n$ let $d_{k,n}$ be the dimension of the $k^{\mathrm{th}}$ graded piece of $R_n/I_{\lambda_n}$ as a complex vector space.  Then the sequence $(d_{k,1},d_{k,2},d_{k,3},\ldots)$ is eventually polynomial, in the sense that there is an integer $s_k$ and polynomial $p_k(n)$ in $n$ so that for all $n \geq s_k$ the dimensions $d_{k,n}=p_k(n)$.
\end{enumerate}
\end{corollary}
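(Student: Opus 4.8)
The plan is to derive Corollary~\ref{cor:springer_rep_is_representation_stable} directly from Theorem~\ref{thm:the_main_theorem} together with the general machinery of Church--Ellenberg--Farb, without any new combinatorics. Theorem~\ref{thm:the_main_theorem} already establishes that $W := \{R_n/I_{\lambda_n}\}$ is a graded co-FI-module of finite type, i.e.\ that its dual $W^\vee = \{(R_n/I_{\lambda_n})^\vee\}$ is a graded FI-module of finite type. Each of the three assertions is a standard consequence of this, applied graded-piece by graded-piece.

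For part (1), I would invoke the theorem of \cite{CEF} (proved there over $\mathbb{C}$, or more generally \cite{CEFN}) that a finitely generated FI-module over a field of characteristic zero is uniformly representation stable. Since $W^\vee$ has finite type, its $k^{\mathrm{th}}$ graded part $(W^\vee)^k = \{(R_n/I_{\lambda_n})^\vee_k\}$ is a finitely generated FI-module, hence uniformly representation stable; the structure maps $f_n$ are exactly the transpose-duals of the co-FI restriction maps $\iota_{n+1}^*$, and one checks quickly that finite generation of an FI-module forces these maps to be injective with $S_{n+1}$-orbit spanning, so all four clauses of the definition of representation stability hold. For part (2), I would cite the result of \cite{CEF} that the sequence of characters of a finitely generated FI-module is \emph{eventually polynomial} (a character polynomial in the cycle-counting functions $X_r$, independent of $n$ for $n$ large); applying this to $(W^\vee)^k$ gives that $(\chi_{k,n})$ is eventually polynomial, and since the character of a representation equals the character of its dual (over $\mathbb{C}$, characters are real-valued on $S_n$, or simply $\chi_{V^\vee}(w) = \overline{\chi_V(w)} = \chi_V(w^{-1})$ and conjugacy classes in $S_n$ are closed under inversion), this is the character of the $k^{\mathrm{th}}$ graded part of $R_n/I_{\lambda_n}$ itself. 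For part (3), I would specialize part (2): the dimension $d_{k,n}$ is the value of the character $\chi_{k,n}$ at the identity, so evaluating the eventually-polynomial character at $X_1 = n$, $X_r = 0$ for $r \geq 2$ yields a polynomial $p_k(n)$ with $d_{k,n} = p_k(n)$ for all $n \geq s_k$, where $s_k$ is the stable range for the $k^{\mathrm{th}}$ graded piece. Again $\dim(R_n/I_{\lambda_n})_k = \dim (R_n/I_{\lambda_n})^\vee_k$, so this transfers to the original Garsia--Procesi construction.

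The only point requiring genuine care — and the step I expect to be the main (mild) obstacle — is the bookkeeping across the dualization: one must be careful that ``finite type'' for the co-FI-module $W$ is \emph{defined} as finite generation of each graded piece of $W^\vee$ (Definition~\ref{def:finite_generation_and_type}), so that the CEF theorems apply to $W^\vee$ on the nose, and then to verify that the conclusions (representation stability, polynomiality of characters and dimensions) are genuinely duality-invariant so that they can be stated for $R_n/I_{\lambda_n}$ as in Remark~\ref{rem:Hotta}. Everything else is a direct citation. I would therefore organize the proof as: (i) recall from Theorem~\ref{thm:the_main_theorem} that $W^\vee$ is a graded FI-module of finite type; (ii) apply \cite[Theorem~1.13]{CEF} (or the appropriate numbered result) to each graded piece for part (1); (iii) apply \cite[Theorem~1.5 / Theorem~3.3.4]{CEF} on polynomiality of characters for part (2); (iv) evaluate at the identity for part (3); and (v) note in each case that the invariant in question (multiplicities, character, dimension) is unchanged under passing to the dual.
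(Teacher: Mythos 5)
Your proposal is correct and follows essentially the same route as the paper: both derive all three parts from Theorem~\ref{thm:the_main_theorem} (finite type of $W^\vee$) together with \cite[Theorem~1.13]{CEF} for part~(1) and \cite[Theorem~1.5]{CEF} for parts~(2) and (3), then transfer the conclusions from $\left(R_n/I_{\lambda_n}\right)^\vee$ to $R_n/I_{\lambda_n}$ using duality-invariance of characters and dimensions. Your write-up is somewhat more careful than the paper's about spelling out the dualization bookkeeping and the evaluation of the character polynomial at the identity, but the argument is the same.
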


\begin{proof}
This follows immediately from corresponding results of Church, Ellenberg, and Farb. 
Part (1) follows from Theorem~\ref{thm:the_main_theorem} together with the definition of representation stability (or, e.g., \cite[Theorem 1.13]{CEF}). Part (2)  follows for $\left(R_n/I_{\lambda_n}\right)^\vee$ from \cite[Theorem 1.5]{CEF} and for the dual representation $R_n/I_{\lambda_n}$ because the characters of complex representations of $S_n$ respect the operation of taking duals.   Part (3) is an easy corollary of Part (2) for $R_n/I_{\lambda_n}$ by~\cite[Theorem 1.5]{CEF} and for its dual because they have the same dimension as complex representations. 
\end{proof}

%%%%%%%%%%%%%%%%%%%%%%%%%%%%%%%%%%%%%%%%%%%%
%%%%%%%%%%%%%%%%%%%%%%%%%%%%%%%%%%%%%%%%%%%%
%%               Section 7                %%
%%%%%%%%%%%%%%%%%%%%%%%%%%%%%%%%%%%%%%%%%%%%
%%%%%%%%%%%%%%%%%%%%%%%%%%%%%%%%%%%%%%%%%%%%

\section{Combinatorial consequences}\label{sec:combinatorial_consequences}

The results of Corollary~\ref{cor:springer_rep_is_representation_stable} open up new combinatorial questions about Springer fibers.  Representation stability guarantees that for any sequence of Young diagrams $\{\lambda_n\}$, the $k^{\mathrm{th}}$ degree of the cohomology of the corresponding Springer fibers stabilizes as a polynomial $p_k(n)$.  But what is this polynomial?  For instance, given the sequence $\lambda_1 \subseteq \lambda_2 \subseteq \lambda_3 \subseteq \cdots$ of Young diagrams:
 \begin{itemize}
     \item Is there an explicit formula for the dimensions $p_k(n)$ of the $k^{\mathrm{th}}$ graded part of $R_n/I_{\lambda_n}$?  What if the sequence contains a particular family of Young diagrams (e.g. hooks, two-row, two-column, etc.)?
     \item What is the minimal integer $s_k$ at which the dimension of $R_n/I_{\lambda_n}$ becomes polynomial?  Given $k$ and a sequence of Young diagrams, can we find some $s_k$ (not necessarily minimal) after which the dimension of $R_n/I_{\lambda_n}$ is polynomial?
     \item Can we show the Springer representations have polynomial dimension via the monomial bases $\mathcal{B}(\lambda_n)$ from Section~\ref{sec:Springer_theory}?
 \end{itemize}
 To the best of our knowledge, these are entirely new questions about Springer representations; they arise only because of the consequences of representation stability.  Moreover, our preliminary answers to these questions suggest additional combinatorial structures that may undergird Springer representations.  We give details and some concrete results in this section, followed in the next by (more) open questions.

\subsection{Minimal monomials}\label{subsec:min_mono}

Our first results suggest that the exponents that appear in the monomials within each $\mathcal{B}(\lambda)$ are more rigid than previously thought.  

\begin{definition}[Monomial type]
Let $x^{\alpha}$ be a monomial with exponent vector $(\alpha_1, \ldots, \alpha_n)$.  In other words, the exponent of the variable $x_i$ in $x^{\alpha}$ is $\alpha_i$.  Let $(\alpha_{i_1}, \ldots, \alpha_{i_r})$ be the nonzero exponents in the order in which they appear in $x^{\alpha}$.  This $r$-tuple is called the \textit{monomial type} of $x^{\alpha}$.
\end{definition}

For instance both $x_2x_3^2x_4$ and $x_3x_9^2x_{11}$ have monomial type $(1,2,1)$.

The following result says that if $x_I^{\alpha}$ is a monomial of a fixed monomial type in the GP-basis $\mathcal{B}(\lambda)$ then increasing the indices of the variables lexicographically while preserving the monomial type produces another monomial in the GP-basis $\mathcal{B}(\lambda)$.  

\begin{theorem}\label{thm:shift_the_indices}
Fix a composition $\alpha = (\alpha_1, \ldots, \alpha_k)$. For each ordered sequence $I=(i_1,\ldots,i_k)$  let $x_I^{\alpha}$ denote the monomial $\prod_{j=1}^k x_{i_j}^{\alpha_j}$.  If $x_I^{\alpha} \in \mathcal{B}(\lambda)$ and $I'$ satisfies 
\[I \leq I' \leq (n-k+1,\ldots,n-1,n)\] 
in lexicographic order then $x_{I'}^{\alpha} \in \mathcal{B}(\lambda)$.
\end{theorem}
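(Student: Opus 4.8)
\section*{Proof proposal}

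The plan is to prove the equivalent statement that for fixed $\alpha$ and $\lambda$ the set $G(\alpha,\lambda)=\{I : x_I^\alpha\in\mathcal{B}(\lambda)\}$ of (strictly increasing) $k$-tuples is upward closed in lexicographic order. Since lexicographic order is a total order on increasing $k$-tuples in $\{1,\dots,n\}$, with maximum $(n-k+1,\dots,n)$ --- so that the right-hand inequality in the statement merely records that $x_{I'}^\alpha$ lies in the same ambient ring $\mathbb{C}[x_1,\dots,x_n]$ --- it suffices to show: if $x_I^\alpha\in\mathcal{B}(\lambda)$ and $I^{+}$ is the lexicographic successor of $I$, then $x_{I^{+}}^\alpha\in\mathcal{B}(\lambda)$; iterating then reaches every $I'\geq I$. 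I would induct on $n$, the case $n=1$ being trivial. For the inductive step I use the Garsia--Procesi recursion in the refined form of a \emph{disjoint} union $\mathcal{B}(\lambda)=\bigsqcup_{i=1}^{\ell}x_n^{\,i-1}\mathcal{B}(\lambda_i)$, where $\ell$ is the number of parts of $\lambda$ and $\lambda_i$ is the $i$-th diagram of the recursion (the union is disjoint because its pieces are distinguished by the degree of $x_n$). The workhorses are two facts proved earlier: $\mathcal{B}(\lambda_\ell)\subseteq\cdots\subseteq\mathcal{B}(\lambda_1)$ by Proposition~\ref{proposition: dominance order} (since $\lambda_i\unlhd\lambda_{i+1}$), and $\mathcal{B}(\lambda_i)\subseteq\mathcal{B}(\lambda)$ for each $i$ by Lemma~\ref{lemma: containment and Garsia-Procesi basis} (since each $\lambda_i$ is $\lambda$ with a single corner box deleted).

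Next I make the successor explicit: if $p$ is the largest index with $i_p<n-k+p$, then the tail $(i_{p+1},\dots,i_k)$ is forced to be $(n-k+p+1,\dots,n)$ and $I^{+}=(i_1,\dots,i_{p-1},\,i_p+1,\,i_p+2,\dots,i_p+k-p+1)$. This splits into four cases according to whether $p=k$ and whether $\max I^{+}=n$. In the two ``aligned'' cases --- namely $p=k$ with $i_k+1<n$, and $p<k$ with $\max I^{+}=n$ --- the monomials $x_I^\alpha$ and $x_{I^{+}}^\alpha$ have the \emph{same} $x_n$-degree $d$ (either $0$ or $\alpha_k$). Stripping $x_n^{d}$ places both inside one basis $\mathcal{B}(\lambda_{d+1})$ for a partition of $n-1$ ($\mathcal{B}(\lambda_1)$ if $d=0$, $\mathcal{B}(\lambda_{\alpha_k+1})$ if $d=\alpha_k$); one checks directly that the lexicographic successor of the stripped support is the stripped successor, so the inductive hypothesis applies and these cases close cleanly.

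The two remaining cases, where the $x_n$-degree changes, carry the real content. The ``promotion'' case is $p=k$ with $i_k=n-1$: here $x_I^\alpha\in\mathcal{B}(\lambda_1)$ with top variable $x_{n-1}$ of degree $\alpha_k$, so stripping $x_{n-1}^{\alpha_k}$ gives $x_{(i_1,\dots,i_{k-1})}^{(\alpha_1,\dots,\alpha_{k-1})}\in\mathcal{B}\bigl((\lambda_1)_{\alpha_k+1}\bigr)$, and the goal $x_{I^{+}}^\alpha=x_n^{\alpha_k}\,x_{(i_1,\dots,i_{k-1})}^{(\alpha_1,\dots,\alpha_{k-1})}\in\mathcal{B}(\lambda)$ reduces to $\mathcal{B}\bigl((\lambda_1)_{\alpha_k+1}\bigr)\subseteq\mathcal{B}(\lambda_{\alpha_k+1})$, which follows from Lemma~\ref{lemma: containment and Garsia-Procesi basis} once one verifies the diagram inclusion $(\lambda_1)_{\alpha_k+1}\subseteq\lambda_{\alpha_k+1}$; this is a short argument about corners, using that $\alpha_k+1\geq 2$, so that deleting the corner of the first block does not disturb the lower corner that witnesses the difference. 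The ``demotion'' case --- the carry case $p<k$ with $\max I^{+}<n$ --- is the main obstacle: now $x_I^\alpha$ has positive $x_n$-degree while $x_{I^{+}}^\alpha$ has $x_n$-degree zero, so one cannot simply strip and recurse. My plan is to peel off the entire top consecutive block $x_{n-k+p+1}^{\alpha_{p+1}}\cdots x_n^{\alpha_k}$ of $x_I^\alpha$, arriving at $x_{(i_1,\dots,i_p)}^{(\alpha_1,\dots,\alpha_p)}$ in a Garsia--Procesi basis for a smaller partition, then peel the degree-zero variables down to level $\max I^{+}$, apply the inductive hypothesis there, and re-insert, using the dominance nesting $\mathcal{B}(\lambda_\ell)\subseteq\cdots\subseteq\mathcal{B}(\lambda_1)$ to move the intermediate monomial into the correct piece of the decomposition. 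The crux, and the step I expect to be hardest, is showing that the partitions produced by these two peeling orders agree up to a diagram inclusion, so that Lemma~\ref{lemma: containment and Garsia-Procesi basis} applies: since the recursion's elementary moves ``delete the rightmost box of row $c+1$, then re-sort'' do \emph{not} commute in general, this demands a careful analysis of how deleting a box from the top block interacts with deleting a box from a lower block and with the re-sorting, exploiting the constraints already forced by $x_I^\alpha\in\mathcal{B}(\lambda)$. The computations in Example~\ref{example: GP-tree} are consistent with all of this and serve as useful sanity checks while carrying out the bookkeeping.
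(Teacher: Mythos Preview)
Your approach differs substantially from the paper's. The paper does \emph{not} induct on $n$ or track the lexicographic successor. Instead it reduces at the outset to the case where $I$ and $I'$ differ in a single coordinate $j$ with $i'_j = i_j + 1$, via an interpolating chain $I = I_{k+1}, I_k, \ldots, I_1 = I'$ where $I_j = (i_1,\dots,i_{j-1},i'_j,\dots,i'_k)$. With that reduction the argument is short: run the GP-algorithm for both monomials in parallel from level $n$ down to level $i_j+2$, where both reach the same subdiagram $\lambda^{(j+2)}$; the next two levels remove a first-row box and a row-$(\alpha_j{+}1)$ box in opposite orders, and a direct check shows the resulting diagrams satisfy $\lambda^{(j+1)} \unrhd \lambda^{(j)}$. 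One application of Proposition~\ref{proposition: dominance order} gives $\mathcal{B}(\lambda^{(j+1)}) \subseteq \mathcal{B}(\lambda^{(j)})$, and since the common prefix $x_{i_1}^{\alpha_1}\cdots x_{i_{j-1}}^{\alpha_{j-1}}$ lies in the former it lies in the latter.

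The single-index reduction \emph{eliminates your carry case outright}: the paper never compares $(\lambda_1)_{\alpha_k+1}$ with $\lambda_{\alpha_k+1}$ or reconciles two long peeling orders; it only ever swaps two adjacent box-deletions and invokes dominance once. Your promotion and demotion cases are artifacts of insisting on the lex-successor step. The demotion case you flag as the main obstacle is genuinely hard by that route, and your sketch does not resolve it---whether the two peeling orders produce diagrams related by containment is exactly the non-commutativity you worry about, and you offer no argument that it holds.

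One caveat worth recording: the paper's interpolation tacitly uses $i_j \le i'_j$ for every $j$, i.e.\ componentwise comparison rather than bare lexicographic order (these differ on increasing tuples, e.g.\ $(1,5,6) <_{\mathrm{lex}} (2,3,4)$). Your literal reading of ``lexicographic'' is precisely what forces the carry on you. The componentwise reading suffices for every use of the theorem in the paper---in particular for Theorem~\ref{thm: at least k+1 rows gives a polynomial dimension formula}, where one only ever shifts indices upward---so if you adopt it, the paper's argument is both simpler and complete, and your hardest case evaporates.
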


\begin{proof}
First note that we may simply consider the case when $I$ and $I'$ differ by exactly one in exactly one entry.  Indeed, for each $j$ with $1 \leq j \leq k+1$, define the sequences $I_j = (i_1, i_2, \ldots, i_{j-1}, i'_j, i'_{j+1}, \ldots, i'_k)$.  Note that $I=I_{k+1}$ and $I' = I_1$.  Thus it suffices to prove that if $x_{I_{j+1}}^{\alpha} \in \mathcal{B}(\lambda)$ then $x_{I_{j}}^{\alpha} \in \mathcal{B}(\lambda)$.  Furthermore we may assume that $i'_j = i_j+1$ since repeating the argument would successively increment the index and imply the result for more general $i'_j$. Thus we prove the claim for $I_j$ and $I_{j+1}$ assuming that $i'_j = i_j+1$.  

The GP-algorithm proceeds the same for both monomials $x_{I_j}^{\alpha}$ and $x_{I_{j+1}}^{\alpha}$ in the variables $x_n, x_{n-1}, \ldots, x_{i_j+2}$.  Let $\lambda^{(j+2)}$ be the Young diagram left after those steps.  Implementing the GP-algorithm for $x_{I_{j+1}}^{\alpha}$ removes a box from the first row and then from the $\alpha_j+1^{\mathrm{th}}$ row of $\lambda^{(j+2)}$, whereas for $x_{I_j}^{\alpha}$ it removes a box from the $\alpha_j+1^{\mathrm{th}}$ row and then from the first row.  Call the resulting Young diagrams $\lambda^{(j+1)}$ and $\lambda^{(j)}$ respectively.

We now show that $\lambda^{(j+1)} \unrhd \lambda^{(j)}$ in dominance order.  If the first row of $\lambda^{(j+2)}$ is strictly larger than the $\alpha_j+1^{\mathrm{th}}$ row of $\lambda^{(j+2)}$ then the result is clear, since then $\lambda^{(j+1)} = \lambda^{(j)}$.  If not, then removing a box from the first row of $\lambda^{(j+2)}$ leaves a Young diagram whose shape is constrained: at least the first $\alpha_j$ rows have the same length and there is at least one row immediately after of length one less.  Thus $\lambda^{(j+1)} \unrhd \lambda^{(j)}$ with equality in the case that exactly the first $\alpha_j+1$ rows of $\lambda^{(j+2)}$ are the same length, and any subsequent rows are smaller.

By Proposition~\ref{proposition: dominance order}, we have $\mathcal{B}(\lambda^{(j+1)}) \subseteq \mathcal{B}(\lambda^{(j)})$.  Since $x_{i_1}^{\alpha_1}x_{i_2}^{\alpha_2} \cdots x_{i_{j-1}}^{\alpha_{j-1}}$ is a GP-monomial in $\mathcal{B}(\lambda^{(j+1)})$ it is also a GP-monomial in $\mathcal{B}(\lambda^{(j)})$ so the monomial $x_{I_j}^{\alpha}$ can be obtained from the GP-algorithm starting with the shape $\lambda^{(j+2)}$.  Thus $x_{I_j}^{\alpha}$ is in the GP-basis for $\mathcal{B}(\lambda)$ as desired, and the claim follows.
\end{proof}

In fact we conjecture that there is a unique minimal monomial of each monomial type, in the following lexicographic sense.

\begin{definition}[Minimal monomials]
Fix a monomial type $\alpha$ and a partition $\lambda$ of $n$.  The monomial $x_I^{\alpha} \in \mathcal{B}(\lambda)$ is a {\em minimal monomial} of type $\alpha$ if for any other $x_{I'}^{\alpha} \in \mathcal{B}(\lambda)$ of the same monomial type, the index sets satisfy $I \leq I'$ in lexicographic order.
\end{definition}

The next result proves that when $\lambda$ has exactly two rows then there is a unique minimal monomial of each monomial type in $\mathcal{B}(\lambda)$.  The previous result then says that this minimal monomial produces all other monomials of that fixed monomial type within $\mathcal{B}(\lambda)$.

\begin{theorem}\label{thm:2_row_case_min_monomials}
Fix a two-row Young diagram $\lambda = (n-k,k)$. Then the basis $\mathcal{B}(\lambda)$ of $R_{n}/I_\lambda$ has the unique minimal monomial set $\left\{ \prod\limits_{j=1}^i x_{2j} \right\}_{i=1}^k$.
\end{theorem}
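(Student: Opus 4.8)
The plan is to use the recursive structure of the GP-algorithm directly, unwinding one variable at a time. Fix the two-row shape $\lambda = (n-k,k)$ and a monomial type $\alpha = (\alpha_1, \ldots, \alpha_r)$. Since $\lambda$ has only two rows, every edge label in the GP-tree is of the form $x_j^0$ or $x_j^1$, so the only monomial types that can appear in $\mathcal{B}(\lambda)$ are squarefree, i.e. $\alpha = (1,1,\ldots,1)$ with some number $i$ of ones, where necessarily $0 \le i \le k$. Thus the claim amounts to showing that among all squarefree monomials of degree $i$ in $\mathcal{B}(\lambda)$, the lexicographically smallest index set is $(2,4,6,\ldots,2i)$, and that this index set does indeed occur.

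First I would verify that $x_2 x_4 \cdots x_{2i} \in \mathcal{B}(\lambda)$. Running the GP-algorithm on $\lambda=(n-k,k)$: each time we use the edge $x_j^1$ we remove the rightmost box of the second row (dropping that row's length by one); each time we use $x_j^0$ we remove the rightmost box of the first row. Starting from $(n-k,k)$ and peeling off $x_n, x_{n-1}, \ldots$, one checks that choosing $x_j^1$ exactly at $j = 2, 4, \ldots, 2i$ (and $x_j^0$ elsewhere, reading from $j=n$ down to $j=1$) keeps both rows weakly decreasing and legal at every step — the key point is that after using $x_{2m}^1$ there remain $2m-1$ variables $x_1,\ldots,x_{2m-1}$ and the current second row has length $k-i+ (\text{number of }1\text{'s still to place})$, which stays $\le$ the first row because the first row started much longer. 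I would make this bookkeeping precise, but it is routine.

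For minimality, I would argue by contradiction: suppose $x_I \in \mathcal{B}(\lambda)$ is squarefree of degree $i$ with $I = (i_1 < i_2 < \cdots < i_i)$ and $I <_{\mathrm{lex}} (2,4,\ldots,2i)$, so there is a least $m$ with $i_m < 2m$ (and $i_\ell = 2\ell$ for $\ell < m$). The indices $i_1 < \cdots < i_m$ are $m$ distinct positive integers all $\le i_m \le 2m-1$, and among $\{1,\ldots,i_m\}$ only $\lceil i_m/2\rceil \le m$ of them can be "used"—but more usefully, consider what the GP-algorithm has done by the time it reaches variable $x_{i_m}$. The monomial $x_I$ requires that when we descend the tree we use edge $x_j^1$ precisely for $j \in \{i_1,\ldots,i_i\}$. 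By the time we have processed $x_n, \ldots, x_{i_m}$ we have used $x_j^1$ exactly $(i - m + 1)$ times for $j > i_m$ — no wait, exactly for the $j \in I$ that are $\ge i_m$, which is $i - m + 1$ of them — so the second row of the current diagram has been shortened by $i-m+1$ from its initial length $k$, leaving length $k-i+m-1$, while the first row has length $(n-k) - (i_m - 1 - (i-m))= $ (the number of $x_j^0$ steps among $j=i_m,\ldots,n$ subtracted from $n-k$). The contradiction I expect: at the step using $x_{i_m}^1$ we must remove a box from the second row, but $x_1,\ldots,x_{i_m - 1}$ — only $i_m - 1 \le 2m-2$ variables remain, of which $m-1$ more must still carry a $1$ (namely $i_1,\ldots,i_{m-1}$), forcing the second row to have length $\ge m-1$ at the *end*; tracing back, the second row must have length $\ge m$ just before the $x_{i_m}$ step, hence the first row has length $\ge m$ too, hence at least $i_m - 1 \ge$ (something) — and the squeeze $i_m \le 2m-1$ together with the two-row constraint "(row 1 length) $\ge$ (row 2 length) at every stage" will be violated.

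\textbf{Main obstacle.} The genuine content is this last paragraph: turning "$i_m < 2m$" into an illegal configuration of the two-row diagram partway through the GP-algorithm. The cleanest route is probably to prove the contrapositive as an invariant: define, for a squarefree $x_I \in \mathcal{B}(\lambda)$, the partial sums counting $1$-edges among $x_n,\ldots,x_{j}$, and show that legality of the GP-descent at every stage forces, for each $m \le i$, that $i_m \ge 2m - (n-k-k)^+$-type bound — which for $n-k \ge k$ collapses to $i_m \ge 2m$ exactly when combined with $i_m > i_{m-1} = 2(m-1)$. I would set this up as a short lemma: \emph{in the GP-tree of $(n-k,k)$, a downward path uses its $m$-th $1$-edge (counting from the top) at level $\le n - (2m-1)$, i.e. at a variable with index $\ge 2m$}; this is proved by induction on $m$ using that each $1$-edge must be preceded (higher in the tree) by enough $0$-edges to keep row $1 \ge$ row $2$. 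Given that lemma, both the membership of $x_2x_4\cdots x_{2i}$ (the path that is tight at every $1$-edge) and the minimality follow immediately, and I would expect the whole argument to fit in under a page.
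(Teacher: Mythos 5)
Your proposal follows the same route as the paper's proof: reduce to squarefree monomial types, then establish $i_r \geq 2r$ by tracking the second-row length of the intermediate diagrams $\lambda^{(i)}$ in the GP-tree, observing that $\lambda^{(i_r)}$ must have second-row length at least $r$ (hence at least $2r$ boxes total) in order to supply the $r$ second-row removals that still occur from Level $i_r$ down to Level $1$. The paper states the inequality directly rather than by contradiction, but the count is identical; your extra check that $x_2 x_4 \cdots x_{2j}$ actually lies in $\mathcal{B}(\lambda)$, via the path using $0$-edges down to shape $(j{+}1,j)$ and then alternating $1$-edges and $0$-edges, supplies a step the paper leaves implicit.

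Two slips to repair before your sketch is airtight. First, the line ``leaving length $k-i+m-1$'' is not an equality: $0$-edges also shorten the second row whenever the current shape is a square $(a,a)$, so the number of $1$-edges used so far only gives an \emph{upper} bound on the second-row length, and that computation is in any case unnecessary once you pass to the ``tracing back'' argument. Second, in the concluding lemma ``counting from the top'' should be ``counting from the bottom'': the statement you need is that the $m$-th \emph{smallest} element of $I$ is at least $2m$, and the level/index translation $n-(2m-1)$ versus $\geq 2m$ needs the depth-from-the-top convention made explicit to avoid an apparent contradiction with the paper's Level numbering. With those two points cleaned up, your argument is essentially the paper's.
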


\begin{proof}
Since $\lambda$ has two rows, the maximum exponent that appears in a monomial for $\lambda$ is $1$.  Since $\lambda$ has $k$ boxes in the second row, the maximum degree in a monomial for $\lambda$ is $k$.  So every monomial type that appears in $\mathcal{B}(\lambda)$ is the partition given by $j$ copies of $1$, where $j$ is any integer with $0 \leq j \leq k$.  

We claim that the minimal monomial of degree $j$ is $x_2 x_4 \cdots x_{2j}$. To see this, let $x_I \in \mathcal{B}(\lambda)$ have degree $j$ and index set $I = (i_1, \ldots, i_j)$. It suffices to show that $(2, 4, \ldots, 2j) \leq I$ in lexicographic order, namely that $i_r \geq 2r$ for each $r \in \{1,\ldots,j\}$. Recall that in the GP-algorithm, exactly one rightmost box in a row is removed when going down each level of the GP-tree. Going from Level $i$ to Level $i-1$, the box removed is in the first row of the Level $i$ diagram if $i \notin \{i_1, \ldots, i_j\}$ and the second row otherwise. After a box is removed from the first row, the rows of the resulting Level $i-1$ diagram are switched if necessary to maintain the shape of a Young diagram. 

Now suppose $r \in \{1,\ldots,j\}$. The path in the GP-tree from Level $i_r$ to Level $i_r - 1$ removes a box from the second row of the Level $i_r$ diagram $\lambda^{(i_r)}$. The diagram $\lambda^{(i_r)}$ must have at least $r$ boxes in the second row since the $r-1$ factors $x_{i_{r-1}}, x_{i_{r-2}}, \ldots, x_{i_1}$ remaining in $x_I$ correspond to $r-1$ more second-row boxes removed in the remaining $i_r - 2$ levels of the GP-tree below Level $i_r - 1$. Since all Level $i_r$ diagrams have $i_r$ boxes and $\lambda^{(i_r)}$ has at least $r$ boxes in its second row, we conclude $i_r \geq 2r$ as desired.
\end{proof}

We conjecture that for each $\mathcal{B}(\lambda)$ and each monomial type that appears in $\mathcal{B}(\lambda)$, there is a unique minimal monomial of that monomial type (see Section~\ref{subsec:unique_min_mono}).

\subsection{Polynomial dimension when the number of rows exceed a certain minimum}\label{subsec:poly_dim}

We can explicitly compute the polynomial dimensions described in Corollary~\ref{cor:springer_rep_is_representation_stable} in some cases, including when the Young diagram has ``enough" rows.  In this case, the dimensions in low degree coincide with the corresponding dimensions for the diagonal coinvariant algebra, as described in the next theorem.  Kim's version of stability coincides with ours in this case \cite[Remark in Section 4]{Kim_Stability}.

\begin{theorem}\label{thm: at least k+1 rows gives a polynomial dimension formula}
Let $\dim_i(R_n/I_{\lambda_n})$ denote the dimension of the $i^{\mathrm{th}}$-degree part of $R_n/I_{\lambda_n}$.  Let $\{ \lambda_n \}$ be a nested sequence of Young diagrams such that $|\lambda_n| = n$ for all $n$.  If for some $N$ the diagram $\lambda_N$ contains at least $k+1$ rows, then $\dim_i\left(\bigslant{R_n}{I_{\lambda_n}}\right)$ agrees with the dimension of the $i^{\mathrm{th}}$-degree part of the diagonal coinvariant algebra $\bigslant{R_n}{\left\langle e_j \right\rangle_{j=1}^n}$ for all $i \leq k$ and $n \geq N$.

In particular  for all $i \leq k$ and $n \geq N$ we have $\dim_i\left(\bigslant{R_n}{I_{\lambda_n}}\right) = p_i(n)$  where $p_i(n)$ is the polynomial that gives the number of permutations of the set $\{1,2,\ldots,n\}$ with exactly $i$ inversions.
\end{theorem}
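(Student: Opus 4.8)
The plan is to show that for a nested sequence $\{\lambda_n\}$ with $\lambda_N$ having at least $k+1$ rows, the degree-$i$ part of $R_n/I_{\lambda_n}$ coincides with the degree-$i$ part of the diagonal coinvariant algebra $R_n/\langle e_1,\dots,e_n\rangle$ for all $i\le k$ and $n\ge N$, and then invoke the classical fact that the Hilbert series of the coinvariant algebra is the Gaussian binomial $\binom{n}{1,1,\dots,1}_q=[n]_q!$, whose degree-$i$ coefficient counts permutations of $\{1,\dots,n\}$ with exactly $i$ inversions. The last sentence of the theorem is then immediate once the first sentence is established, since $p_i(n)$ is by definition the Mahonian number (and is polynomial in $n$ for fixed $i$, as is well known and also follows from Corollary~\ref{cor:springer_rep_is_representation_stable}).

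For the main claim I would argue via the Garsia-Procesi monomial basis $\mathcal{B}(\lambda)$. First, since $\lambda_N\subseteq\lambda_n$ for $n\ge N$, Lemma~\ref{lemma: containment and Garsia-Procesi basis} gives $\mathcal{B}(\lambda_N)\subseteq\mathcal{B}(\lambda_n)$; moreover the diagonal coinvariant algebra itself is $R_n/I_{\mu_n}$ where $\mu_n=(1,1,\dots,1)$ is the single-column partition, and $\mu_n\subseteq\lambda_n$ whenever $\lambda_n$ has $n$ boxes and at least... — actually the cleanest route is dominance: the column $(1^n)$ is the minimum in dominance order among partitions of $n$, so Proposition~\ref{proposition: dominance order} gives $\mathcal{B}(\lambda_n)\subseteq\mathcal{B}((1^n))$ for every $\lambda_n$, i.e. $\dim_i(R_n/I_{\lambda_n})\le\dim_i(R_n/\langle e_j\rangle)=p_i(n)$ for all $i$. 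So one inequality is free and holds with no hypothesis on the number of rows. The content is the reverse inequality: when $\lambda_N$ has at least $k+1$ rows, every GP-monomial of degree $\le k$ that lies in $\mathcal{B}((1^n))$ already lies in $\mathcal{B}(\lambda_n)$.

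To prove that reverse containment I would track the GP-algorithm. A monomial $x^\alpha=\prod x_{i_j}^{\alpha_j}$ of total degree $i\le k$ lies in $\mathcal{B}(\lambda_n)$ iff running the GP-tree from $\lambda_n$ down, at each level either a first-row box is removed (when the current index is absent from the support) or an $(\alpha_j+1)$-st row box is removed. Because the total degree is at most $k$, at most $k$ "non-first-row" removals are ever requested, and each requests row number at most $k+1$ (since the largest exponent $\alpha_j$ in a degree-$k$ monomial is at most $k$). Starting from $\lambda_n\supseteq\lambda_N$ which has at least $k+1$ rows, I claim the Young diagram stays tall enough: every intermediate shape $\lambda^{(m)}$ encountered while processing a monomial of degree $\le k$ still has at least $k+1$ rows — indeed we only ever remove boxes from among the first $k+1$ rows and from the first row, and one checks that removing $\le k$ boxes from a diagram with $\ge k+1$ nonempty rows, in the GP-fashion (always a rightmost box, rearranging to stay a Young diagram), cannot reduce the number of rows below $k+1$ as long as we have not yet processed all $n-i$ of the "first-row" levels; the first-row removals eventually shorten row 1 but by the time row 1 would vanish we have descended past all the levels that matter for the fixed monomial. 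Making this precise is the main obstacle: I need a clean invariant, something like "after the first $t$ steps the shape contains the rectangle $(c)\times(k+1)$ for $c$ large compared to the remaining degree," pushed through the row-rearrangement bookkeeping. Granting that invariant, at each level where an $(\alpha_j+1)$-st-row box is demanded the row exists and is nonempty, so the branch is available, hence $x^\alpha\in\mathcal{B}(\lambda_n)$; this gives $\dim_i(R_n/I_{\lambda_n})\ge p_i(n)$ for $i\le k$, completing the equality. The final polynomiality statement then follows either from the explicit Mahonian formula or directly from Corollary~\ref{cor:springer_rep_is_representation_stable}(3).
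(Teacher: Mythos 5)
Your overall plan is sound: argue via the Garsia--Procesi monomial basis, get one inequality from dominance, get the reverse by showing $\mathcal{B}(\lambda_n)$ contains every degree-$\leq k$ monomial of $\mathcal{B}(\col_n)$, then cite the Mahonian interpretation of the Hilbert series of $R_n/\langle e_1,\dots,e_n\rangle$. The first inequality, $\mathcal{B}(\lambda_n)\subseteq\mathcal{B}(\col_n)$ by Proposition~\ref{proposition: dominance order}, is correct. The gap is in the reverse containment, and it is a real one: the invariant you propose is false as stated. You claim that removing at most $k$ boxes from a shape with at least $k+1$ nonempty rows, GP-style, cannot drop the row count below $k+1$. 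But a single removal from a length-one lower row already kills a row: from $(2,1,1,1)$ with $k=3$, removing the (only) box in row $2$ gives $(2,1,1)$, which has only $3 < k+1$ rows. So your bookkeeping would need to be substantially more refined, and you explicitly flag that you have not pushed the invariant through the row-rearrangement step. As written the argument does not close.

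The paper's proof avoids the GP-tree invariant entirely by combining two tools, one of which you did not bring to bear on this direction. Since $\lambda_n$ (for $n\geq N$) contains the single-column shape $\col_{k+1}$, Lemma~\ref{lemma: containment and Garsia-Procesi basis} (the \emph{containment} lemma, not dominance) gives $\mathcal{B}(\col_{k+1})\subseteq\mathcal{B}(\lambda_n)$. One identifies $\mathcal{B}(\col_{k+1})=\{x_2^{a_2}\cdots x_{k+1}^{a_{k+1}}\,:\,0\leq a_j\leq j-1\}$, and observes that every degree-$\leq k$ monomial in $\mathcal{B}(\col_n)=\{x_2^{a_2}\cdots x_n^{a_n}\,:\,0\leq a_j\leq j-1\}$ arises from some monomial in $\mathcal{B}(\col_{k+1})$ of the same monomial type by increasing the indices. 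Then Theorem~\ref{thm:shift_the_indices} (the index-shift result) promotes membership in $\mathcal{B}(\lambda_n)$ from the compressed monomial to the shifted one. This gives the reverse containment without ever tracking the intermediate shapes in the GP-tree. I would suggest replacing your "clean invariant" paragraph with exactly this: restrict the containment lemma to $\col_{k+1}\subseteq\lambda_n$ and then apply Theorem~\ref{thm:shift_the_indices}.
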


\begin{proof}
Let  $\col_n = (1,\ldots,1)$ denote the column partition with exactly $n$ parts.  The diagonal coinvariant algebra is $R_n/I_{\col_n}$.  There is a unique monomial $x_2 x_3^2 x_4^3 \cdots x_n^{n-1} \in \mathcal{B}(\col_n)$ of maximal degree; it is obtained by choosing the far-right edge at each level in the GP-tree.  Garsia-Procesi proved that their basis elements form a lower-order ideal with respect to division \cite[Proposition 4.2]{GarPro92}.  It follows that the other elements of $\mathcal{B}(\col_n)$ are precisely the divisors of this maximal monomial, so
\begin{equation}\label{basis for column partition of size n}
\mathcal{B}(\col_n) = \{ x_2^{\alpha_2} x_3^{\alpha_3} x_4^{\alpha_4} \cdots x_n^{\alpha_n} \; | \; 0 \leq \alpha_j \leq j-1 \}.
\end{equation}

Now suppose that $\lambda_n$ has at least $k+1$ rows.  Then $\lambda_n \supseteq \col_{k+1}$ and so $\mathcal{B}(\lambda_n) \supseteq \mathcal{B}(\col_{k+1})$ by Lemma~\ref{lemma: containment and Garsia-Procesi basis}. By Theorem~\ref{thm:shift_the_indices} we can lexicographically increase the subscripts of each monomial in $\mathcal{B}(\col_{k+1})$ to produce monomials $x_I^{\alpha}$ for each $k$-element subset $I \subseteq \{x_2, \ldots, x_n\}$ and exponents $\alpha$ which satisfy $0 \leq \alpha_j \leq j-1$ for each $j=1,2,\ldots,k+1$.   This proves that $\mathcal{B}(\lambda_n)$ also contains all of those possible elements of degrees $0, 1, \ldots, k$ in the variable set $\{x_2, \ldots, x_n\}$.  Trivially, these are the same as the monomials of degrees $0, 1, \ldots, k$ in the basis for $\mathcal{B}(\col_n)$ above, so in these degrees the basis elements for  $R_n/I_{\lambda_n}$ and the diagonal coinvariant algebra $R_n / I_{\col_n}$ coincide.  The claim about $p_i(n)$ follows from the similar statement for the $i^{\mathrm{th}}$ graded part of $R_n / I_{\col_n}$.
\end{proof}

The polynomial function $p_i(n)$ can be found explicitly as an alternating sum of certain combinations.   Example~\ref{exam:poly_dim_closed_form} gives closed formulas for $p_i(n)$ for the first few values of $i$.

\begin{remark}
Since the diagonal coinvariant algebra is the cohomology ring of the full flag variety, this result implies that the Garsia-Procesi bases for the cohomology of the Springer fibers coincide with the cohomology of the full flag variety in many degrees.  More precisely if $\mathcal{S}_{\lambda_n}$ denotes the Springer fiber corresponding to the nilpotent of Jordan type $\lambda_n$ then we have an isomorphism $H^i(\mathcal{S}_{\lambda_n}) \cong H^*(GL_n(\mathbb{C})/B)$ for all $i \leq k$ and $n \geq N$, where $k$ and $N$ are as given in Theorem~\ref{thm: at least k+1 rows gives a polynomial dimension formula}.
\end{remark}

\begin{example}
Below we give the basis elements for  $\mathcal{B}(\col_5)$ in degrees $0$ through $3$ and the cardinalities of the $k^{\mathrm{th}}$ degree parts for $4 \leq k \leq 10$, using the unimodality of the sequence of dimensions for $k\geq 6$.
\begin{center}
\begin{tabular}{c|c|l}
degree & \# & \hspace{.75in} monomials in $\mathcal{B}(\col_5)$ \\ \hline \hline
0 & 1 & 1 \\ \hline
1 & 4 & $x_i$ for $2 \leq i \leq 5$ \\ \hline
2 & 9 & $x_i^2$ for $3 \leq i \leq 5$\\
& & and $x_ix_j$ for $2 \leq i<j \leq 5$\\ \hline
3 & 15 & $x_i x_j x_k$ for $2 \leq i<j<k \leq 5$,\\
  &    & $x_i x_j^2$ for $2 \leq i < j \leq 5$,\\
  &    & $x_i^2 x_j$ for $3 \leq i < j \leq 5$, and\\
	&		 & $x_i^3$ for $4 \leq i \leq 5$\\  \hline
4 & 20 & \\ \hline
5 & 22 & \\ \hline
$6 \leq k \leq 10$ & same cardinality as & \\
  & $(10-k)^{\mathrm{th}} \mbox{ degree part}$ & \\ \hline
$k > 10$ & 0 & \\
\hline
\end{tabular}
\end{center}
Compare this to the basis $\mathcal{B}(\lambda)$ for the partition $\lambda = (2,2,1)$ that we computed in Example~\ref{example: GP-tree}.  The basis elements for $\mathcal{B}(\lambda)$ and $\mathcal{B}(\col_n)$ coincide in degrees 0, 1, and 2, as expected.  However in degree 3 the set $\mathcal{B}(\col_5)$ contains  four basis elements that are not in $\mathcal{B}(\lambda)$, namely $x_2 x_3 x_4$, $x_3^2 x_4$, $x_4^3$, and $x_5^3$.
\end{example}

\begin{example}\label{exam:poly_dim_closed_form}
We give the following closed formulas for the dimensions $p_i(n)$ from Theorem~\ref{thm: at least k+1 rows gives a polynomial dimension formula} when $0 \leq i \leq 6$:
\begin{itemize}
\item  When $i=0$ we have $p_0(n) = 1$.  
\item When $i = 1, 2, 3, 4$ we have
\begin{equation}\label{nice_closed_formula}
p_i(n) = \binom{n-2+i}{i} - \binom{n-3+i}{i-2}.
\end{equation}
\item When $i=5$ we have
\[p_5(n) = \binom{n+3}{5} - \binom{n+2}{3} + 1.\]
\item When $i=6$ we have
\[p_6(n) = \binom{n+4}{6} - \binom{n+3}{4} + n.\]
\end{itemize}

To prove these, let $i \in \{1,2,3,4\}$.  Our convention is that $0 \in \mathbb{N}$. Define the set
\begin{align*}
\widetilde{E_i} &= \left\{ (d_2,\ldots,d_n)\in \mathbb{N}^{n-1} \; \middle\vert \; \sum_{k=2}^n d_k = i \mbox{ and } d_k \leq i \right\}
\end{align*}
of nonnegative integer solutions to the equation $d_2 + \cdots + d_n = i$.  We know $|\widetilde{E_i}| = \binom{(n-2)+i}{i}$.  Define $E_i \subseteq \widetilde{E_i}$ to be the subset satisfying also $d_k \leq k-1$ for each $k$ so that $p_i(n) = |E_i|$.  We now compute $|E_i| = |\widetilde{E_i}| -  |E_i^c|$.  

For each $m$, define the set $F_{m,i} \subseteq \widetilde{E_i}$ by the condition that $d_m > m-1$. Observe that $E_i^c = \textstyle\bigcup\limits_{m=2}^{n} F_{m,i}$.  Moreover the sets $F_{m,i}$ are pairwise disjoint. Indeed, suppose there were $m_1<m_2$ with an element $(d_2,\ldots,d_n) \in F_{m_1,i} \cap F_{m_2,i}$.  Then $d_{m_1} > m_1-1 \geq 1$ and $d_{m_2} > m_2-1 \geq 2$ and so $d_2 + \cdots + d_n \geq 5$.  This contradicts $i \in \{1,2,3,4\}$.  

So $|E_i^c| = \textstyle\sum\limits_{m=2}^n|F_{m,i}|$.  We now find $|F_{m,i}|$.  Since $d_m > m-1$ we write $d_m = m + d_m'$ for some $d_m' \geq 0$ and then count instead the solutions to $d_2 + \cdots + d_m' + \cdots + d_n = i-m$. Thus
\begin{align*}
|E_i^c| = \sum_{m=2}^n |F_{m,i}| = \sum_{m=2}^n \binom{(n-2) + (i-m)}{i-m}.
\end{align*}
As long as $i \leq n$ this sum has the form $|E_i^c| = \cdots + \binom{r_m + 2}{2} + \binom{r_m + 1}{1} + \binom{r_m}{0}$ for some $r_m$.  Using a combinatorial identity we write $|E_i^c| = \binom{(n-2) + (i-2) + 1}{i-2}$.  This gives the formula for $p_i(n)$ in Equation~(\ref{nice_closed_formula}).

When $i=5$ and $i=6$ we can use essentially the previous argument, since only a few pairs of $F_{m,i}$ share elements.  When $i=5$ the sets $F_{2,5}$ and $F_{3,5}$ both contain $(2,3,0,0,0,\ldots,0)$.  Inclusion-Exclusion gives the desired formula.  Similarly if $i=6$ then  $F_{2,6} \cap F_{3,6}$ contains the two elements $(3,3,0,0,\ldots,0)$ and $(2,4,0,0,\ldots,0)$ as well as the $n-3$ elements $(2,3,d_4,\ldots,d_n)$ where exactly one of the remaining entries $d_k$ is nonzero (and thus is 1).  Additionally  $F_{2,6} \cap F_{4,6}$ contains the unique element $(2,0,4,0,0,\ldots,0)$.  Inclusion-Exclusion gives the desired formula.  

This process becomes more complicated when $i$ is larger.
\end{example}

Kim gives equivalent but different versions of the formulas in Example~\ref{exam:poly_dim_closed_form}~\cite[Example~3.3]{Kim_Stability}.

%%%%%%%%%%%%%%%%%%%%%%%%%%%%%%%%%%%%%%%%%%%%
%%%%%%%%%%%%%%%%%%%%%%%%%%%%%%%%%%%%%%%%%%%%
%%               Section 8                %%
%%%%%%%%%%%%%%%%%%%%%%%%%%%%%%%%%%%%%%%%%%%%
%%%%%%%%%%%%%%%%%%%%%%%%%%%%%%%%%%%%%%%%%%%%

\section{Open questions}\label{sec:open_questions}

This section describes a set of questions that arise from our analysis of representation stability, including how to characterize the polynomials into which the dimensions stabilize, and how to describe the monomials in $\mathcal{B}(\lambda)$.

\subsection{Minimal monomials and monomial types}\label{subsec:unique_min_mono}

In Subsection~\ref{subsec:min_mono} we discussed monomial types and minimal monomials.  We pose several questions here.

\begin{question}
For each Young diagram $\lambda$ and monomial type $\alpha$, is there a unique monomial $x_I^{\alpha} \in \mathcal{B}(\lambda)$ that is minimal with respect to lexicographic ordering, in the sense that if $x_{I'}^{\alpha} \in \mathcal{B}(\lambda)$ then $I \leq I'$?
\end{question}

All examples that we have computed do in fact have minimal monomials.

There are several related questions about how the set of monomial types that appear in $\mathcal{B}(\lambda)$ are related to $\lambda$. The first essentially asks how to predict the monomial types that appear for a given $\lambda$, while the second asks which $\mathcal{B}(\lambda)$ contain a given monomial type $\alpha$.

\begin{question}
For each Young diagram $\alpha$, let $\mathcal{A}(\lambda)$ be the set of monomial types in $\mathcal{B}(\lambda)$.  Is there a quick algorithm to construct $\mathcal{A}(\lambda)$ for arbitrary $\lambda$? 
\end{question}

We know there is an algorithm to construct $\mathcal{A}(\lambda)$, namely use the GP-algorithm to find all of $\mathcal{B}(\lambda)$ and then identify which monomial types appear.  The previous question is asking for a direct algorithm, or for characterizations of the monomial types that do or do not appear.

\begin{question}
Fix a monomial type $\alpha$.  For which $\lambda$ is there a monomial of type $\alpha$ in $\mathcal{B}(\lambda)$?
\end{question}

For instance $\mathcal{B}(\lambda)$ contains a monomial of type $(k)$ if and only if $\lambda$ has at least $k+1$ rows.  

\subsection{Polynomial dimension for particular families of Young diagrams} 
In this section, we return to questions from Subsection~\ref{subsec:poly_dim} to ask for concrete formulas for dimensions of the Springer representations, and especially for the polynomials $p_k(n)$ to which they stabilize.

Representation stability relies on sequences of diagrams while the literature on Springer fibers instead uses families of Young diagrams like two-row, two-column, hook, and so on.  To study families of Young diagrams, we choose a sequence $\{\lambda_n\}$ that characterizes particular shapes, as Example \ref{example: 2-column} does for two-column Young diagrams.  

\begin{question}
For what families of Young diagrams $\{\lambda_n\}$ can we find an explicit closed formula for the polynomial dimension $p_k(n)$?
\end{question}
 
For instance, Kim gave a formula for two-row Young diagrams~\cite{Kim_Euler_char}, following ideas due to Fresse~\cite[Example~4.5]{Fresse}. 

When we can identify minimal monomials (unique or not), strictly combinatorial techniques might then give information about the polynomial dimension formula.  More precisely:

\begin{question}
Can we find minimal monomials for other shapes, and use them together with the index-incrementation tools in Theorem~\ref{thm:shift_the_indices} to describe the dimension polynomials, either partially or completely?
\end{question}

For instance, Theorem~\ref{thm:2_row_case_min_monomials} gave the minimal monomials for the case of the two-row diagrams $\lambda = (n-k,k)$.  In that case, the previous question sketches an alternate proof of Kim's result for two-row Young diagrams.

\subsection{Stable shapes} 

The central question of this section is to determine the integer $s_k$ after which the polynomial $p_k(n)$ counts the degree-$k$ monomials in $\mathcal{B}(\lambda_n)$. More colloquially, when does the dimension stabilize?  

We conjecture that there is a core shape such that as soon as $\lambda_n$ contains the core shape, the dimension stabilizes.  The following describes one possible choice of a core shape, though we do not believe that it is in general minimal.

\begin{definition}
Let $\lambda_1 \subseteq \lambda_2 \subseteq \cdots$ be a sequence of Young diagrams with $|\lambda_n| = n$ for each $n$.  Denote by $\tau_{k+1}$ the staircase partition $(k+1, k, \ldots,1)$.  The \textit{$k$-stable shape} of $\{ \lambda_n \}$ is the partition $(\cup_n \, \lambda_n) \cap \tau_{k+1}$.
\end{definition}

\begin{example}\label{example: 2-column}
Let $\{ \lambda_n \}$ be the nested sequence of Young diagrams defined by $\lambda_1 = (1)$, $\lambda_{2n} = (\underbrace{2, 2, \ldots, 2}_{n \; \mbox{\tiny times}})$, and $\lambda_{2n+1} = (\underbrace{2, 2, \ldots, 2}_{n \; \mbox{\tiny times}},1)$.  Then $\{ \lambda_n \}$ is the following
$$\mbox{\small $\Yvcentermath1 \yng(1) \rightarrow \yng(2) \rightarrow \yng(2,1) \rightarrow \yng(2,2) \rightarrow \yng(2,2,1) \rightarrow \yng(2,2,2) \rightarrow \yng(2,2,2,1) \rightarrow \yng(2,2,2,2) \rightarrow \cdots$}$$
The $1$-, $2$- and $3$-stable shapes are $\mbox{\tiny \Yvcentermath1 \yng(2,1)}$, $\mbox{\tiny \Yvcentermath1 \yng(2,2,1)}$ and $\mbox{\tiny \Yvcentermath1 \yng(2,2,2,1)}$, respectively.
\end{example}

\begin{conjecture}
Let $\{ \lambda_i \}$ be a sequence of Young diagrams such that $\lambda_1 \subseteq \lambda_2 \subseteq \cdots$ with $|\lambda_i| = i$.  Let $N$ be the smallest integer for which $\lambda_N$ contains the $k$-stable shape of $\{ \lambda_i \}$.  Then there exists a polynomial $p_k(n)$ that gives the dimension of the $k^{\mathrm{th}}$ graded part of the Springer representation $R_n/I_{\lambda_n}$ for all $n \geq N$.
\end{conjecture}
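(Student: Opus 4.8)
The plan is to reduce the conjecture to a single combinatorial statement about the Garsia-Procesi bases: that monomials of bounded degree ``see'' only a bounded staircase-shaped window of the partition. Write $\tau_{k+1}=(k+1,k,\ldots,1)$ and, for a partition $\lambda$, call $\lambda\cap\tau_{k+1}$ its \emph{window}. Two elementary facts anchor the argument. First, every monomial type of degree at most $k$, viewed as a partition, fits inside $\tau_{k+1}$: such a type has at most $k$ parts, and its $i$-th part is at most $\lfloor k/i\rfloor$, which is at most $k+2-i$ whenever $1\le i\le k$. Second, for the nested sequence $\{\lambda_i\}$ the window is constant for $n\ge N$, equal to the $k$-stable shape $T$: from $T\subseteq\lambda_N\subseteq\lambda_n\subseteq\bigcup_m\lambda_m$ we get $T=T\cap\tau_{k+1}\subseteq\lambda_n\cap\tau_{k+1}\subseteq(\bigcup_m\lambda_m)\cap\tau_{k+1}=T$; in particular $N\ge|T|$.

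The core is the following \emph{Window Lemma}: for every partition $\lambda$, the set of degree-$\le k$ monomials in $\mathcal{B}(\lambda)$ depends only on $\lambda\cap\tau_{k+1}$ and $|\lambda|$. I would prove this by nested induction, outer on $k$ and, for fixed $k$, inner on $|\lambda|$. Writing $\ell=\ell(\lambda)$, the recursion $\mathcal{B}(\lambda)=\bigcup_{i=1}^{\ell}x_n^{i-1}\,\mathcal{B}(\lambda_i)$ contributes in degree $\le k$ only through the branches with $i\le k+1$, and the number of such available branches $\min(\ell,k+1)$ equals $\ell(\lambda\cap\tau_{k+1})$, so it is read off from the window. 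For the branch $i=1$, apply the inner hypothesis to $\lambda_1$ (which has $|\lambda|-1$ boxes), using that $\lambda_1\cap\tau_{k+1}$ is determined by $\lambda\cap\tau_{k+1}$: deleting the top-right box of $\lambda$ either performs the same deletion on $\lambda\cap\tau_{k+1}$ (when that box lies inside the window) or leaves it unchanged (when the first row is long). For a branch $i\ge2$, apply the outer hypothesis with the strictly smaller parameter $k-i+1$, using the matching fact that $\lambda_i\cap\tau_{k-i+2}$ is determined by $\lambda\cap\tau_{k+1}$ --- since $\tau_{k-i+2}$ is a sub-staircase of $\tau_{k+1}$ and the box deleted from row $i$ is either inside this smaller window (its effect visible in $\lambda\cap\tau_{k+1}$) or outside it (invisible to both). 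The base cases are immediate: for $k=0$ the degree-$0$ part is $\{1\}$, and if $|\lambda|=|\lambda\cap\tau_{k+1}|$ then $\lambda\subseteq\tau_{k+1}$ is the only partition with that window.

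Granting the Window Lemma, for $n\ge N$ the dimension $\dim_k(R_n/I_{\lambda_n})$ is a function of $(T,n)$ alone, so it suffices to evaluate it on one convenient sequence $\{\nu_n\}_{n\ge|T|}$ with $\nu_n\cap\tau_{k+1}=T$, $\nu_{|T|}=T$, and $\nu_n\subseteq\nu_{n+1}$. I would choose $\nu_n$ to grow ``away from the window'': append the extra $n-|T|$ boxes to the first row when $T_1=k+1$, and as trailing single-box rows otherwise (in which case $T_1<k+1$ forces the sequence to have unboundedly many rows, so $T$ has exactly $k+1$ parts). For such $\nu_n$ every GP-subdiagram occurring in the branches $i\le k+1$ is again of the form ``fixed bounded shape plus boxes far from its window'', which lets one set up a recursion for $\dim_k\mathcal{B}(\nu_n)$ --- with an induction on $k$ and on $|T|$ --- whose solution is a polynomial $p_k(n)$ in $n$; the column case $\nu_n=(1^n)$, where $p_k(n)$ counts permutations of $\{1,\ldots,n\}$ with $k$ inversions, is Theorem~\ref{thm: at least k+1 rows gives a polynomial dimension formula}, and the two-row base data is Theorem~\ref{thm:2_row_case_min_monomials}. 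Since $N\ge|T|$ this gives $\dim_k(R_n/I_{\lambda_n})=p_k(n)$ for all $n\ge N$, as conjectured.

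The hard part is making this last step uniform: one must show the recursion stabilizes by $n=|T|$, so that $p_k(n)$ is the correct value from $n=N$ on and not merely eventually. Via Theorem~\ref{thm:shift_the_indices} this is equivalent to the claim that, for each monomial type $\alpha$ with $r$ parts occurring in $\mathcal{B}(\nu_n)$, the minimal monomial $x_I^{\alpha}\in\mathcal{B}(\nu_n)$ is independent of $n$ for $n\ge|T|$ --- so that the type-$\alpha$ count $\#\{I':\ I\le I'\le(n-r+1,\ldots,n)\}$ is literally polynomial from there on, equivalently, that enlarging $\nu_n$ away from its window never creates a lexicographically smaller monomial of a given type. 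This is precisely the kind of control over minimal monomials for general shapes that is still open (Section~\ref{subsec:unique_min_mono}); Theorem~\ref{thm:2_row_case_min_monomials} settles it for two rows, and a proof in general seems to require extending that analysis, or else a different route that bounds the generation and relation degrees of $\bigl(R_n/I_{\lambda_n}\bigr)^\vee$ in degree $k$ directly in terms of $T$ and feeds them into the Church-Ellenberg-Farb stability-degree estimates. The Window Lemma itself, by contrast, I expect to be routine if tedious bookkeeping.
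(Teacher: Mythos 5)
First, a framing point: the statement you are proving is stated in the paper as a \emph{conjecture}, and the paper gives no proof of it --- Section~\ref{sec:open_questions} explicitly lists it among open questions. So there is no argument of the paper to compare against; the only question is whether your sketch closes the gap on its own, and it does not.

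There are in fact two gaps. The second you flag yourself: even granting the Window Lemma, showing that the dimension function of the model sequence $\{\nu_n\}$ is \emph{exactly} polynomial from $n=|T|$ onward (and not merely eventually) is, as you say, the open control over minimal monomials from Section~\ref{subsec:unique_min_mono}. The first gap, which you do not flag, is that the inductive step you sketch for the Window Lemma is not correct as stated. In the $i=1$ branch of the GP recursion you replace $\lambda$ by $\lambda^{(1)}$, the partition obtained by deleting the rightmost box of row~1, and you claim that $\lambda^{(1)}\cap\tau_{k+1}$ is determined by $\lambda\cap\tau_{k+1}$ together with $|\lambda|$ --- with the dichotomy being whether the deleted box lies inside the window or in a ``long'' first row. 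But when the first part of $\lambda\cap\tau_{k+1}$ equals $k+1$, you cannot tell from the window and $|\lambda|$ alone whether $\lambda_1=k+1$ (deleted box inside the window) or $\lambda_1>k+1$ (deleted box outside it). Concretely, take $k=2$, so $\tau_3=(3,2,1)$, and compare $\lambda=(3,1,1,1)$ with $\lambda'=(4,1,1)$: both have six boxes and both have window $(3,1,1)$, yet $\lambda^{(1)}=(2,1,1,1)$ has window $(2,1,1)$ while $\lambda'^{(1)}=(3,1,1)$ has window $(3,1,1)$. So the invariant ``$(\lambda\cap\tau_{k+1},\ |\lambda|)$'' is not preserved by the $i=1$ step, and the inner inductive hypothesis does not apply to the pair $\{\lambda^{(1)},\lambda'^{(1)}\}$. (One can check by hand that the degree-$\le 2$ parts of $\mathcal{B}((2,1,1,1))$ and $\mathcal{B}((3,1,1))$ do in fact coincide, but that coincidence is not explained by your invariant --- which is precisely the sign that it is not the right one to carry through the induction.) The Window Lemma may well be true, but it is not the ``routine if tedious bookkeeping'' you suggest; establishing it appears to require a sharper invariant than the $\tau_{k+1}$-window, and the conjecture remains open even modulo that lemma because of the polynomial-stabilization step you already identified.
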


If true, this conjecture would provide more tools to explicitly compute  $p_k(n)$.

\begin{acknowledgements}
The authors thank Tom Church, Jordan Ellenberg, and Benson Farb for useful conversations. We also thank the anonymous referees for their helpful comments and suggestions.
\end{acknowledgements}

% BibTeX users please use one of
%\bibliographystyle{spbasic}      % basic style, author-year citations
%\bibliographystyle{spmpsci}      % mathematics and physical sciences
%\bibliographystyle{spphys}       % APS-like style for physics
%\bibliography{}   % name your BibTeX data base

% Non-BibTeX users please use

\end{document}